\newcolumntype{R}{>{\raggedleft\arraybackslash}X}
\newcolumntype{C}{>{\centering\arraybackslash}X}
\newcommand{\abs}[1]{\ensuremath{\lvert #1 \rvert}}
\newcommand{\lottypesatbranchfunction}{L}
\newcommand{\lottypesatbranch}{L(b)}
\newcommand{\lottypessubsetatbranch}[1]{L_{#1}(b)}
\newcommand{\multsatbranchlottypefunction}{M}
\newcommand{\multsat}[3][]{M_{#1}(#2, #3)}
\newcommand{\multsatbranchlottype}{\multsat{b}{l}}
\newcommand{\multssubatbranchlottype}[1]{\multsat[#1]{b}{l}}
\newcommand{\slackvariable}{p}
\newcommand{\slackcost}{P}
\newcommand{\STATICCPLEX}{STATIC\textunderscore{}CPLEX}
\theoremstyle{theorem}
\newtheorem{observation}{Observation}
\newtheorem{theorem}{Theorem}
\newtheorem{remark}{Remark}
\theoremstyle{definition}
\newtheorem{definition}{Definition}
\let\emptyset\varnothing
\begin{document}
\allowdisplaybreaks

\date{}

\title{An exact column-generation approach for the lot-type design problem}

\author{Miriam Kie\ss ling \and Sascha Kurz \and J\"org Rambau
  \\[3mm]Lehrstuhl f\"ur Wirtschaftsmathematik
  \\Universit\"at Bayreuth\\Germany.
  \\ Tel.: +49-921-557353, Fax: +49-921-557352
  \\\texttt{miriam.kiessling@googlemail.com}
  \\\texttt{$\{$sascha.kurz,joerg.rambau$\}$@uni-bayreuth.de}}


\maketitle

\begin{abstract}
  We consider a fashion discounter distributing its many bran\-ches with integral multiples from a set of available lot-types.
  For the problem of approximating the branch and size dependent demand using those lots we propose a tailored exact column
  generation approach assisted by fast algorithms for intrinsic subproblems, which turns out to be very efficient on our
  real-world instances as well as on random instances.\\

  \noindent
  \textbf{Keywords:} $p$-median; facility location; lot-type design; column generation\\
  \textbf{Mathematics subjects classification:} 90C06; 90C90; 90B99
\end{abstract}

\section{Introduction}
\label{sec_introduction}

\noindent
Due to small profit margins of most fashion discounters, applying OR
methods is mandatory for them. In order to reduce the handling costs
and the error proneness in the central warehouse, our business partner
orders all products in multiples of so-called \textit{lot-types} from
the suppliers and distributes them without any replenishing to its
branches. A lot-type specifies a number of pieces of a product for
each available size, e.g., lot-type $(1,2,2,1)$ means two pieces of
size $\text{M}$ and $\text{L}$, one piece of size $\text{S}$ and
$\text{XL}$, if the sizes are
$(\text{S},\text{M},\text{L},\text{XL})$.

We want to solve the following approximation problem: which (integral)
multiples of which (integral) lot-types should be supplied to a set of
branches in order to meet a (fractional) expected demand as closely as
possible? We call this specific demand approximation problem the
\textit{lot-type design problem (LDP)} in~\cite{p_median}. In that
paper, also a basic model for the LDP was introduced, accompanied by
an integer linear programming formulation and a tailored heuristic,
which turned out to perform very well for the real-world data of our
partner.

\added[id=JR, comment={made clear why the old paper did not solve the
  problem for us}]{The problem that remained unaddressed in~\cite{p_median}
  is the following:} For many practical instances the set of
applicable lot-types and thus the number of variables is so large that
the ILP formulation from~\cite{p_median} cannot be solved
directly.\footnote{E.g.\ for $12$ different sizes, which is reasonable
  for lingerie or children's clothing, there are $1\,159\,533\,584$
  different lot-types, if we assume that there should be at most $5$
  items of each size and that the total number of items in a lot-type
  should be between $12$ and $30$.}  \added[id=JR, comment={put a
  preview of the contribution right here}]{In this paper, we
  therefore propose a new algorithm \emph{Augmented Subproblem
    Grinding (ASG)} based on dynamic model generation by the
  branch-and-price principle.  The main result is twofold,
  experimentally obtained on a benchmark set of 860 real-world
  instances from the running production in 2011: First, ASG is able to
  solve LDPs with millions and billions of lot-types (for details see
  below) in 10 to 100 seconds to optimality.  Second, for smaller
  instances ASG is faster by a factor of 100 (more for larger
  instances) than the solution of the statically generated ILP model
  for LDP with a state-of-the-art MILP solver (in this paper:
  \texttt{cplex 12.9}).}

\subsection{Related work}\label{sec:related-work}

\added[id=JR, comment={rewrote the literature review for apparel
  supply chain management}]{We start with a brief overview of related
  literature from the application side.  Apparel supply chains pose
  many challenges.  A generic survey would be beyond the scope of this
  paper.  An overview of some aspects (though not specifically related
  to the special subject of this paper) can be found in
  \cite{Ahsan+Azeem:ApparelSCM:2010}. Let us mention two very
  difficult partial challenges. The first problem is the estimation of
  the demand, since many apparel products are never
  replenished. Therefore, the sales data only yield a right-censored
  observation of the demand.  Moreover, apparel comes in different
  sizes.  A branch of a fashion discounter is supplied by only very
  few (one, two, or three) pieces per size, and the whole supply must
  have been sold at the end of the sales-period (12 weeks for our
  partner), possibly by heavy mark-downs.  Thus, it is difficult to know, how
  and when the sales data should be measured.  The second problem is
  the integrality of the supply in the presence of small fractional
  expected demands.  That means, that even if the fractional expected
  demand for an item in a size were known, we face the approximation
  problem of which integral supply is best for which expected demand.
  Our industrial partner uses a system of ordering pre-packed
  assortments (lots) of items with only a small variation of possible
  size-combinations (lot-types).  This restricts the approximation
  problem even further (the LDP). See, e.g., \cite{Kurz+etal:TDI:2015}
  for a discussion and a field study of both aspects combined.}

\added[id=JR, comment={rewrote the literature review}]{The exact
  lot-type design problem was introduced for the first time by two of
  the authors in \cite{p_median}.  Later, it was connected to the
  sales-process more accurately by modeling the mark-downs explicitly
  as recourse actions of a two-stage stochastic program in
  \cite{ubt_eref761}.  Since in that paper, the solution of the LDP is
  an important subroutine, the investigations in this paper have
  direct consequences in the applicability of the results
  in~\cite{ubt_eref761} on instances with many applicable lot-types.}

\added[id=JR, comment={rewrote the literature review}]{The LDP is a
  special case of a more general problem setting: assortment
  optimization asks for good combinations of items with arbitrary
  attributes (not only size), and the customer substitutes between
  them, e.g., buys a red instead of a blue T-shirt.  See
  \cite{Fisher+Vaidyanathan:AssortmentDemand:2014} for various aspects
  of the problem including demand estimation and heuristic
  algorithms. In \cite{lbs1085} it is shown that the general
  assortment problem is difficult and approximation algorithms are
  analyzed.}

\added[id=JR, comment={rewrote the literature review}]{Let us now turn
  to related literature from the methodological point of view, mainly
  mixed integer linear programming (MILP)}

\deleted[id=JR, comment={the contribution is now in an extra
  section}]{In order to overcome the integrality gap of the ILP
  relaxation we propose a tailored branching scheme complemented by
  the use of additional cover cuts. This results in an exact column
  generation approach for the LDP, which is enhanced by properly
  chosen algorithms for important subproblems.  We apply this
  algorithm to a stochastic version SLDP of the LDP, where the
  expectation over more than one demand-scenario is optimized.  The
  SLDP is of the same form as the LDP, and thus all optimization
  techniques viable for the LDP immediately apply to the SLDP.  Since
  using the SLDP instead of the LDP can make decisions more robust
  against forecasting errors, we based our investigations in this
  paper on the SLDP.}

\added[id=JR, comment={comments on BP}]{Our new method is a
  specialization of the \emph{branch-and-price} principle for
  mixed-integer linear programming problems (MILP).  In the
  branch-and-price framework, not all variables are added to an MILP
  in the beginning.  Instead, in each branch-and-bound node of the
  MILP-solution process, variables are added dynamically by solving
  the pricing problem until a computational optimality proof can be
  completed.  A standard survey for branch-and-price algorithms for
  large-scale integer programming problems can be found
  in~\cite{ml:LuebbeckeDesrosiers:05}.  One of the most influential
  early systematic discussions for the branch-and-price principle for
  MILP was presented in \cite{0979.90092}.  In the same spirit, in
  \cite{branch_and_price_generic_scheme} a (quite sophisticated)
  branching scheme based on a classification of columns is formulated
  in a problem-independent way.  Our new method by-passes the
  challenges attacked in that paper and is therefore, in our opinion,
  easier to implement.
}

\added[id=JR, comment={comments on BPC}]{If not only variables but
  also restrictions are added dynamically, one is concerned with
  \emph{branch-price-and-cut} algorithms (see
  \cite{rothenbacher2016branch} for a more recent practical
  application, \cite{bpc-algo} for a methodological survey and
  \cite{Sadykov:PhDThesis:2019} for a more recent comprehensive
  treatment).  There are two principally different reasons for adding
  restrictions dynamically.  First, they can be inevitable, when there
  are model restrictions whose support solely consists of variables
  that are dynamically generated (model restrictions).  Such
  restrictions leading to \emph{row-dependent columns} are discussed,
  e.g., in \cite{note,sim}.  Second, they can be desirable, when they
  help to close the integrality gap in the master problem (cuts).  In
  the second case, one speaks of \emph{robust} cuts if the cuts do not
  change the complexity of the pricing problem (see
  \cite{Pessoa2008}).  In contrast to this, \emph{non-robust} cuts
  need to be observed in the pricing problem and can increase its
  complexity (see \cite{675a55cc4a9c45cc9bdcbaad4dc58755}).  In the
  case of the LDP, we are concerned with inevitable and new restrictions
  that have a significant influence on the pricing problem, as we will
  see below.  In \cite[Section~3.2]{bpc-algo} it is discussed how, in
  principle, the problem of determining reasonable dual variables for
  future cuts in the master problem should be attacked.  However, this
  is essentially a problem that until today is more successfully
  solved for each particular application separately.  One of our main
  results is a solution to this \emph{dual lifting problem} for the
  LDP.}

\added[id=JR, comment={comments on heuristics}]{Another important
  partial problem is the determination of feasible integer solutions.
  If there is no tight problem-specific heuristic available, generic
  primal heuristics can be used, e.g., based on diving down the
  branch-and-bound tree like in \cite{sadykov2019primal}.  In our
  case, a fast and tight heuristics is fortunately known from
  \cite{p_median}.  In \cite{vaclavik2018accelerating} it was
  demonstrated how machine learning can accelerate the time spent in
  the pricing problem by learning a dual bound from previous
  iterations.  Since for the LDP the time spent in the pricing problem
  is not dominant, we have not (yet) tried this idea for the LDP.}

\added[id=JR, comment={comments on $p$-median}]{The LDP is related to
  the $p$-median and the facility location problem: for computational
  results on large instances of the $p$-median problem we refer the
  reader to
  \cite{pre05131061,1170.90521,1163.90607,rebreyend2015computational}.
  Other algorithmic approaches can, e.g., be found in
  \cite{hansen1997variable,mladenovic2007p,hale2017lagrangian,drezner2015new,agra2017decomposition,fast2017branch,marzouk2016branch}.
  The additional cardinality constraint of the LDP prevents a direct
  application of specialized $p$-median algorithms.  Therefore, we
  follow the concept of the more generic branch-and-price framework.}

\deleted[id=JR, comment={replaced this by a more detailed review}]{%
  Branch-and-price algorithms are common \ldots{} have been addressed
  in [\ldots].  }

\subsection{Contribution}\label{sec:contribution}

\added[id=JR, comment={put the contributions here in an extra
  section}]{We present a new algorithmic procedure named
  \emph{Augmented Subproblem Grinding (ASG)} based on the
  branch-and-price principle, albeit with a new strategy.  The new
  results are:}
\begin{itemize}
\item \added[id=JR, comment={the theoretical
    contribution}]{Theoretically, we construct for the LDP a
    characteristic lifting of the dual variables that yields, in a
    certain sense, the tightest possible information about promising
    new columns.}
\item \added[id=JR, comment={the algorithmic
    contribution}]{Algorithmically, based on the previous contribution, we devise a
    column-generation procedure that generates promising combinations of
    columns instead of sets of unrelated columns.  These combinations
    avoid the generation of columns whose negative reduced costs can be
    compensated by an easy modification of the duals of the reduced
    master problem.}
\item \added[id=JR, comment={the practical
    contribution}]{Practically, ASG uses a (to the best of our
    knowledge) new branching scheme: at any time there are only two
    branches, one of which is solved immediately by the black-box
    MILP-solver, whereas the other excludes all solutions of the
    solved branch by a single set-covering constraint with dynamically
    growing support.  Thus, there no branch-and-price-and-bound tree
    must be maintained.  Consequently, ASG is easier to implement than
    ordinary branch-and-price.  Moreover, it exploits the employed
    black-box MILP-solver and its branching intelligence.  This way,
    ASG will take profit of any improvement in the MILP-solver
    technology without modifications in its own code.}
\item \added[id=JR, comment={the experimental
    contribution}]{Experimentally, the efficiency and effectiveness of
    ASG is shown based on 860 real-world instances from the daily
    production of our industry partner from 2011: For the task to find
    an optimal solution and prove its optimality, ASG reduces the
    cpu-times to 0.67\,\% and the numbers of columns to 0.18\,\% on
    average compared to solving the full ILP model by a
    state-of-the-art MILP-solver.  Moreover, for the largest 100
    instances with 134,596 through 1,198,774,720 applicable lot-types
    (unsolved prior to this paper), ASG needs less than 30 seconds and
    10,000 columns on average.  This makes ASG perfectly applicable in
    practice.  We confirm the success by a second randomized benchmark
    set in order to avoid possible artifacts caused by a hidden
    structure of our real-world instances.}
\end{itemize}

\subsection{Outline of the paper}\label{sec:outline-paper}

A formal problem statement is given in
Section~\ref{sec_formal_problem_statement}, followed by an ILP model
in Section~\ref{sec:modelling}. In Section~\ref{sec:theory} we discuss the 
theoretical foundation of our algorithm that is presented in
Section~\ref{sec_column_generation}. We show computational results on
real-world data and on random data in
Section~\ref{sec_results_column_generation}, before we conclude with
Section~\ref{sec_conclusion}.

\section{Formal problem statement}
\label{sec_formal_problem_statement}

\noindent
We consider the distribution of supply for a single product and start
with the formal problem statement in the deterministic
context\added[id=JR, comment={separated LDP from SLDP}]{ before we
  extend it to a stochastic version}.

\subsection{The deterministic lot-type design problem (LDP)}
\label{sec:determ-lot-type}

\textit{Data.} Let $\mathcal{B}$ be the set of branches, $\mathcal{S}$
be the set of sizes, and $\mathcal{M}\subset\mathbb{N}$ be an interval
of possible multiples. A \emph{lot-type} is a vector
$l = (l_s)_{s\in\mathcal{S}}\in\mathbb{N}^{|\mathcal{S}|}$.
\added[id=JR, comment={separated formal problem statement from a
  remark explaining the motivation}]{For given integers
  $\min_c \le \max_c$ (the \emph{component bounds}) and
  $\min_t \le \max_t$ (the \emph{type-bounds}), a lot-type $l$ is
  \emph{applicable}, if $\min_c \le l_s \le \max_c$ for all
  $s \in \mathcal{S}$ and
  $\min_t\le \abs{l} := \sum_{s\in\mathcal{S}} l_s\le \max_t$.  The
  set of applicable lot-types is called a \emph{set of lot-types
    parametrized by $\min_c$, $\max_c$, $\min_t$, and $\max_t$} and is
  denoted by $\mathcal{L}$.  Note, that specifying a parametrized set
  of lot-types and specifying an explicit list of lot-types differ in
  terms of the complexity of models.  A model that is polynomial in
  the number of lot-types is polynomially-sized if the input is an
  explicit list of lot-types whereas it may be exponentially-sized if
  the input is a parametrized set of lot-types.}

\deleted[id=JR, comment={integrated into the formal problem
  statement}]{\color{red}For practical as well for the subsequent
  algorithmical purposes the set of lot-types that we are
  considering here needs to be further restricted. Let $\min_t$ and
  $\max_t$ be integers that lower or upper bound the total number of
  items in a lot-type, respectively , i.e.,
  $\min_t\le \sum_{s\in\mathcal{S}} l_s\le \max_t$.}

\added[id=SK]{The motivation \added[id=JR, comment={connected to the
    attribute ``parametrized''}]{for parametrized sets of lot-types}
  is as follows.}  \replaced[id=JR, comment={simplified
  sentences}]{The cost reduction induced by ordering apparel in
  pre-packed lots in the Far East (low wage) results from the
  reduction of the number of picks in the central warehouse in Europe
  (high wage).  On the one hand, this reduction is only significant if
  there are enough pieces in the used lot-type.  This can be enforced
  by setting $\min_t$ large enough.  On the other hand, a single pick
  of a lot is only possible if there are not too many pieces in the
  used lot-type.  This can be enforced by setting $\max_t$ small
  enough.  Moreover, if a product is advertised in a newspaper or the
  like, each branch must have at least one piece in each size by legal
  regulations.  This can be guaranteed by setting $\min_c$ to one.
  For the sake of a clean presentation at the sales start, it may be
  desirable to enforce that no single size is extremely
  over-represented.  This can be enforced by setting $\max_c$ small
  enough.  If $\min_c = 0$ and $\max_c = \max_t$, then $\min_c$ and
  $\max_c$ do not impose any restriction beyond the restrictions
  imposed by $\min_t$ and~$\max_t$.  In practice, the lot-type
  parameters $\min_t$, $\max_t$, $\min_c$, and $\max_c$ depend on the
  product.  For example, it is easily possible to pick a lot
  containing ten T-shirts whereas it is undesirable to pick a lot of
  ten winter coats. }{Since the main advantage of using lot-types lies
  in the reduction of the number of picks in the central warehouse, we
  should guarantee, that this effect does not dwindle away by
  selecting lot-types with too few items, which can be controlled by a
  suitable value for $\min_t$. There are practical reasons for the
  parameter $\max_t$, too: combining too many winter coats in a lot
  would cause serious handling problems. Similarly to restrictions on
  the total number of items in a lot-type, we additionally introduce
  restrictions for the number of items of the same size. So, let
  $\min_c$ and $\max_c$ be integers that lower or upper bound the
  number of items of a lot-type in each size, respectively, i.e.,
  $\min_c\le l_s\le\max_c$ for all $s\in\mathcal{S}$. Again an example
  why those kinds of restrictions may be practically relevant. By
  setting $\min_c=1$ we can enforce that each branch is supplied in
  each size with at least one item, a requirement which legally arises
  for advertised products. If there is no practical reason for the
  choice of $\max_c$, we can make the constraint ineffective by
  setting $\max_c=\max_t$. To sum up, we call a lot-type $l$
  \emph{applicable} if $\min_c\le l_s\le\max_c$ for all
  $s\in\mathcal{S}$ and
  $\min_t\le \sum_{s\in\mathcal{S}} l_s\le \max_t$ and denote the set
  of applicable lot-types by $\mathcal{L}$. For the managers the usage
  of a parameterized set of lot-types, as characterized above, avoids
  the necessity of explicitly listing the set of suitable lot-types,
  whose number can get quite large, as we will see later on. Of
  course, it is possible to remove some specific lot-types from the
  set of applicable lot-types in practice. Since the corresponding
  algorithmic modifications are minor and in order to ease the
  notation, we do not go into details here.}

There is an upper bound $\overline{I}$ and a lower bound
$\underline{I}$ given on the total supply over all branches and sizes.
Moreover, there is an upper bound $k \in \mathbb{N}$ on the number of
lot-types used. By $d_{b,s}\in\mathbb{Q}_{\ge 0}$ we denote the
\replaced[id=JR, comment={no expectation in the LDP}]{deterministic}{
  expected} demand at branch~$b$ in size~$s$.

\deleted[id=JR,comment={chose to rewrite the problem statement to
  contain both SLDP and LDP as well as their
  relationship}]{  
  \color{red}Here, {\lq\lq}expected
  demand{\rq\rq} either means just a deterministic demand or the
  expectation
  $\sum_{\Omega\in \Omega} p^\omega\cdot d_{b,s}^\omega$ in the
  augmented stochastic model described at the end of this
  section.}

\textit{Decisions.} Consider an assignment of a unique lot-type
$l(b) \in \mathcal{L}$ and an assignment of a unique multiplicity
$m(b)\in\mathcal{M}$ to each branch~$b \in \mathcal{B}$.  These data
specify that $m(b)$ lots of lot-type $l(b)$ are to be delivered to
branch~$b$.  These decisions induce inventories
$I_{b,s}(l, m) := m(b) l(b)_s$ for each branch $b \in \mathcal{B}$ and
each size $s \in \mathcal{S}$ and a total inventory of
$I(l, m) := \sum_{b \in \mathcal{B}} m(b) \abs{l(b)}$ over all
branches and sizes.

\emph{Objective.} The goal is to find a subset
$L \subseteq \mathcal{L}$ of at most $k$ lot-types and assignments
\replaced[id=JR, comment={the choice for each branch must be in the
  selected set}]{$l(b) \in L$}{$l(b) \in \mathcal{L}$} and
$m(b) \in \mathcal{M}$ such that the total supply is within the bounds
$\bigl[\underline{I}, \overline{I}\bigr]$, and the total deviation
between \replaced[id=JR, comment={unified}]{supply}{inventory} and
demand is minimized.  \added[id=JR, comment={formalized the
  definitions}]{More specifically, the \emph{deviation}
  $\Delta_{b,s}(l, m)$ of supply and demand for branch~$b$ and
  size~$s$ given a decision $l(b)$, $m(b)$ for all $b \in \mathcal{B}$
  is defined by
  $\Delta_{b,s}(l, m) := \lvert d_{b,s} - I_{b,s}(l, m) \rvert$.  The
  \emph{cost} of the decisions $l(b)$, $m(b)$ for branch
  $b \in \mathcal{B}$ is defined as
  $c_b(l, m) := \sum_{s \in \mathcal{S}} \Delta_{b,s}(l, m)$.  The
  goal is to minimize the total cost
  $c(l, m) := \sum_{b \in \mathcal{B}} c_b(l, m)$.}

\replaced[id=JR, comment={minor rephrasing}]{The resulting
  optimization problem is}{We call this optimization problem} the
\emph{Lot-Type Design Problem (LDP)}.  \added[id=JR,
comment={emphasized the difference to the LDP paper}]{It is strongly
  related to the LDP in \cite{p_median}.  The difference is that in
  \cite{p_median} the set of applicable lot-types was given by an
  explicit list~$\mathcal{L}$.}  \deleted[id=JR, comment={was shifted
  to an earlier paragraph}]{Using the introduced decision variables
  $L$, $l(b)$, and $m(b)$, we can express the relevant
  decision-dependent entities as follows.  The \emph{inventory of
    branch $b$ in size~$s$} given assignments $l(b)$ and $m(b)$ is
  given by $I_{b,s}(l,m) = m(b) l(b)_s$. Moreover, the \emph{total
    supply} resulting from $l(b)$ and $m(b)$ is given by
  $I(l,m) = \sum_{b \in \mathcal{B}}\sum_{s \in \mathcal{S}}
  I_{b,s}(l,m)$.}

\subsection{The stochastic lot-type design problem (SLDP)}
\label{sec:stoch-lot-type}

\replaced[id=JR, comment={separation of LDP and SLDP}]{The SLDP
  differs from the LDP only in the following.  We consider }{This
  deterministic model can slightly be enhanced to a stochastic model
  by considering} a set $\Omega$ of \emph{scenarios} (for the success
of the product). For each scenario $\omega \in \Omega$ we denote by
$p^{\omega}$ its probability and with
$d_{b,s}^{\omega} \in \mathbb{Q}_{\ge 0}$ the demand at branch~$b$ in
size~$s$ in scenario~$\omega$ for all $b\in\mathcal{B}$ and
$s\in\mathcal{S}$. The goal then is to minimize the expected total
deviation between \replaced[id=JR,
comment={unified}]{supply}{inventory} and demand.  \added[id=JR,
comment={state the obvious}]{The remaining ingredients are the same as
  for the LDP. In particular, the LDP and the SLDP differ only in the
  objective function.}

We call this single-stage stochastic optimization problem the
\emph{Stochastic Lot-Type Design Problem (SLDP)}.

\replaced[id=JR,comment={in the experiments, the nominal scenario is
  used; no idea whether it corresponds to the expected demand}] {In
  this paper, we assume for an SLDP that there is a designated
  \emph{nominal scenario $\hat{\omega} \in \Omega$} that models a
  customer demand that is considered ``normal''.  For example, in the
  data of our industrial partner there are usually three scenarios
  that stem from the classification of a product as ``Renner'' (German
  designation for a product in high demand), ``Normal'' (= normal demand), and
  ``Penner'' (German designation for a product in low demand).  In general, if a
  nominal scenario is not given explicitly in an SLDP, the scenario of
  expected demands is added to the set of scenarios and defined as the
  nominal scenario.

  Each SLDP with a nominal scenario induces two LDPs with identical
  constraint sets as the SLDP: The LDP with deterministic demands
  corresponding the the nominal scenario is called the \emph{nominal
    LDP} of the SLDP.  The LDP where we set the deterministic
  deviations $\Delta(l, m)$ to the expected deviations
  $\overline{\Delta}(l, m) := \sum_{\omega \in \Omega} p^{\omega}
  \Delta^{\omega}(l, m)$ with
  $\Delta^{\omega}(l, m) := \sum_{b \in \mathcal{B}} \sum_{s \in
    \mathcal{S}} \Delta^{\omega}_{b, s}(l, m)$ and
  $\Delta^{\omega}_{b, s}(l, m) := \lvert d_{b, s}^{\omega} -
  \added[id=SK]{I_{b, s}(l, m)}\rvert$ is called the \emph{equivalent
    LDP} of the SLDP.  The equivalent LDP of an SLDP is equivalent to
  the SLDP since both constraints and objective functions are
  identical by the linearity of expectation.  In other word, for any
  SLDP the equivalent LDP is a \emph{deterministic equivalent
    program}~\cite[Section~2.4]{1223.90001} for the SLDP.

  In any case, any algorithm for the LDP can be transformed into an
  algorithm for the SLDP by replacing each computation of
  $\Delta_{b, s}(l, m)$ in the LDP algorithm by the computation of
  $\overline{\Delta}_{b, s}(l, m) = \sum_{\omega \in \Omega}
  p^{\omega} \Delta^{\omega}_{b, s}(l, m)$.  This incurs an overhead
  proportional to the number of scenarios.

  In contrast to the equivalent LDP, the nominal LDP, in general, has
  a smaller objective than the SLDP, since the deviation is non-linear
  and convex in the demands.  We will find out that this difference in
  the objective values is significant in
  section~\ref{sec_results_column_generation}.  We will also evaluate
  the standard measure for the relevance of the difference in the
  \emph{optimal solutions} of the SLDP and the nominal LDP: The
  \emph{value of the stochastic solution
    (VSS)}~\cite[Section~4.2]{1223.90001}.  It turns out that this
  difference is far less pronounced.}{ In this paper, we
    draw this assumption, i.e., we set
    $d_{b,s}=\sum_{\Omega\in \Omega} p^\omega\cdot
    d_{b,s}^\omega$. However, the only place where we will see the
    scenarios $\omega$ again is in the definition of the constants
    $c_{b,l,m}$ in the ILP model of the next section. From the
    algorithmic point of view, given our assumption, there is no
    difference between SLDP and LDP. From a practical point of view,
    the use of scenarios is highly relevant, since the turnover of
    fashion products can significantly depend on the weather, which
    can e.g.\ be concluded from the amount of money fashion
    discounters spend for very accurate weather forecasts. Considering
    markdown decisions are the most natural candidates for second
    stage decisions. However, we do not complicate our model in that
    way and only use the enhanced deviations function
    $\Delta(l,m)=\sum_{b \in \mathcal{B}} \sum_{s \in \mathcal{S}}
    \sum_{\omega \in \Omega} p^{\omega}\cdot \lvert d_{b,s}^{\omega}
    -\added[id=SK]{I_{b,s}(l,m)}\rvert$ as a reasonable and easy to
    compute proxy for the real costs of {\lq\lq}over-{\rq\rq} or
    {\lq\lq}undersupply{\rq\rq}.  In our experiments, we restrict
    ourselves to the case of the deterministic LDP.}

\section{Modelling}
\label{sec:modelling}

\replaced[id=JR, comment={more precise rephrasing}]{We must choose a
  set~$L$ of selected lot-types with $\abs{L} \le k$}{For each branch
  $b \in B$ we must \added[id=SK]{choose} a set~$L$ of selected
  lot-types with $\abs{L} \le k$} and assign \added[id=JR,
comment={more precise rephrasing}]{to each branch~$b \in \mathcal{B}$}
a lot-type $l=l(b) \in L$ and a multiplicity $m=m(b) \in \mathcal{M}$.

\begin{remark}
  \added[id=JR, comment={made this a displayed remark}]{One possible
    attempt to model the (S)LDP is to use variables for the supplies
    for each size separately.  Such a model is given in
    Appendix~\ref{sec:comp-model-param}.  It has the advantage that
    its size is polynomial in the input size, even for a parametrized
    set of lot-types.  However, it exhibits an extremely large
    integrality gap (see Appendix~\ref{sec:comp-model-param} for
    details).  In this paper, we follow the path of using a tighter
    model at the expense of exponentially many variables.  The model
    was introduced in \cite{p_median} and is polynomially-sized if the
    input is an explicit list of lot-types.  It is exponentially sized
    if the input is a parametrized set of lot-types.  }
\end{remark}

In order to model the SLDP as an integer linear program we use binary
assignment variables $x_{b,l,m}$, \added[id=SK]{i.e, $x_{b,l,m}=1$ if
  branch~b is supplied with lot-type~$l$ in multiplicity~$m$ and
  $x_{b,l,m}=0$ otherwise, see~(\ref{ie_bin_x})}. For the used
lot-types we use binary selection variables $y_l$, \added[id=SK]{i.e.,
  $y_l=1$ if $l \in L$ and $y_l=0$ otherwise,
  see~(\ref{ie_bin_y})}. \replaced[id=JR, comment={was introduced
  before}]{Recall that}{As an abbreviation} we utilize
$\lvert l \rvert:=\sum\limits_{s\in\mathcal{S}}l_s$ for the number of
pieces contained in lot-type~$l$.
\begin{align}
  \label{OrderModel_Target}
  \min && \sum_{b\in\mathcal{B}}\sum_{l\in\mathcal{L}}\sum_{m\in\mathcal{M}} c_{b,l,m}\cdot x_{b,l,m}\\
  \label{OrderModel_EveryBranchOneLottype}
  s.t.  &&
           \sum_{l\in\mathcal{L}}\sum_{m\in\mathcal{M}} x_{b,l,m} &= 1 && \forall b\in\mathcal{B}\\
  \label{OrderModel_Binding}
       && \sum_{m\in\mathcal{M}} x_{b,l,m} & \le y_l && \forall
                                                        b\in\mathcal{B}, l\in\mathcal{L}\\
  \label{OrderModel_UsedLottypes}
       &&
          \sum_{l\in\mathcal{L}} y_l & \le k\\
       &&
          \label{OrderModel_Cardinality}
          \underline{I} \le
          \sum_{b\in\mathcal{B}}\sum_{l\in\mathcal{L}}\sum_{m\in\mathcal{M}}
          m \cdot \lvert l \rvert \cdot x_{b,l,m} &\le \overline{I}\\
       &&
          \label{ie_bin_x}
          x_{b,l,m} & \in\{0,1\} && \forall b\in\mathcal{B}, l\in\mathcal{L}, m\in\mathcal{M}\\
       &&
          \label{ie_bin_y}
          y_l & \in\{0,1\} && \forall l\in\mathcal{L},
\end{align}
where
$c_{b,l,m}= \sum\limits_{\added[id=SK]{\omega\in\Omega}}
p^{\added[id=SK]{\omega}}\cdot \sum\limits_{s \in \mathcal{S}}
\bigl\lvert d_{b, s}^{\added[id=SK]{\omega}} - m \cdot l_s
\bigr\rvert\ge 0$. \added[id=SK]{With this, the stated
  \replaced[id=JR, comment={unified}]{objective function}{target
    function} models the {\lq\lq}costs{\rq\rq} of the deviation
  between supply and demand as motivated in the previous section.}
\added[id=JR, comment={included the LDP}]{The deterministic LDP is
  also covered by defining~$\Omega$ as a single-element set.}
\added[id=SK]{Inequality~(\ref{OrderModel_EveryBranchOneLottype})
  guarantees that each branch is supplied with exactly one lot-type
  and one multiplicity. By inequality~(\ref{OrderModel_Binding}) we
  ensure that $y_l=1$ whenever there is a branch $b$ which is supplied
  with lot-type $l$ in some multiplicity $m$. It may happen that
  $y_l=1$ while no branch is supplied with lot-type
  $l$. \replaced[id=JR, comment={simplified language}]{This is
    logically no problem at this point.  However, later in our
    algorithm, we will add constraints to exclude this case.}{However,
    this does not harm the model.}  By
  inequality~(\ref{OrderModel_UsedLottypes}) we ensure that at most
  $k$ lot-types are used, i.e., those with $y_l=1$. The lower and the
  upper bound on the total supply is modeled by
  inequality~(\ref{OrderModel_Cardinality}).}

\added[id=JR, comment={stressed the fact that we do not need two
  models}]{The model reflects the correspondence between an SLDP and
  its equivalent LDP by the fact that it models both at the same
  time.}

\begin{remark}
  \added[id=JR, comment={motivated the notion of a consistent
    instance}]{It is easy to generate an SLDP with no feasible
    solution.  For example, if supplying each branch with $\min_t$
    pieces exceeds the upper bound $\overline{I}$ on the total supply,
    no feasible lot-type design can be found.  Even a feasible
    instance may be ill-posed from a practical point of view: if the
    (expected) total demand does not satisfy the lower and upper
    bounds on the total supply, then the theoretically most desirable
    supply with exactly the demand will be infeasible for the model.
    Still, there may be feasible solutions missing the demand
    completely.  Just think of the extreme 
    case of strictly positive
    demands and $\overline{I} = \min_c = \min_t = 0$; a delivery of
    zero is then formally feasible.  Moreover, the smaller the
    difference between the lower and upper bounds on the total supply
    becomes, the less important is the demand consistency of the
    supply, which is the original goal.  Think again of an extreme
    case of identical lower and upper bounds and only one allowed
    lot-type: then, e.g., if the bound is not divisible by the number
    of branches, there is no feasible solution -- a condition that our
    industrial partner did not intend to impose.  In practice, our
    industrial partner preferred to be notified about such anomalies
    in the input data rather than to receive an artificial solution.
    The usual set-up was to use the expected demand plus/minus 10\%
    for the bounds.}
\end{remark}

\added[id=JR, comment={motivated the experiments with random
  instances}]{In order to avoid anomalies in the input data, we
  specify in the following \emph{consistent instances} of the SLDP and
  the LDP, respectively.  Technically, we define a consistent instance
  in such a way that a natural and easy heuristics (the ALH
  heuristics, see algorithm~\ref{alg:ALH}) always finds a feasible
  solution (see section~\ref{sec:subroutine-SFA} for details).  The
  concept is to report inconsistent instances at the beginning and let
  the core algorithm only deal with SLDP instances that are consistent
  in the following sense:}
\begin{definition}
  An SLDP instance is \emph{consistent}, if
  \begin{enumerate}[(i)]
  \item it is feasible
  \item the total nominal demand lies in the cardinality interval, i.e.,
    $\underline{I} \le \sum_{b \in \mathcal{B}} \sum_{s \in
      \mathcal{S}} d^{\hat{\omega}}_{b, s} \le \overline{I}$ (demand consistency)
  \item the total-cardinality flexibility is at least the
    lot-type-cardinality flexibility, i.e.,
    $\overline{I} - \underline{I} \ge \max_t - \min_t$ (cardinality
    consistency)
  \end{enumerate}
\end{definition}

\section{The theoretical foundations}
\label{sec:theory}

In this subsection, we present the underlying theory in more detail.
\added[id=JR, comment={switched the notation to the LDP
  w.l.o.g.}]{Since for each SLDP we can consider the equivalent LDP,
  we will, without loss of generality, formulate all results in terms
  of the LDP with deterministic demands $d_{b, s}$ to simplify the
  notation.}

By 
using only a small number of variables, the master problem for the pricing phase (MP) can be 
restricted to a manageable size, resulting in the restricted master problem (RMP).  More specifically: Let
$\mathcal{L}'' \subseteq \mathcal{L}$ be the subset of lot-types used
in the previously solved RSLDP, which is initially empty.  We then consider in
the RMP a (small) subset $\mathcal{L}'\subseteq\mathcal{L}$ of the
lot-types containing~$\mathcal{L}''$. For each branch
$b\in\mathcal{B}$ we consider a subset
$\lottypessubsetatbranch{\mathcal{L}'}=\lottypesatbranch\subseteq\mathcal{L}'$
of these lot-types and for each
$l\in\lottypessubsetatbranch{\mathcal{L}'}$ we consider only a subset
$\multssubatbranchlottype{\mathcal{M}}=\multsatbranchlottype\subseteq\mathcal{M}$
of the multiplicities.

For the following, we denote by SLDP the ILP model of the previous
section augmented by
\begin{itemize}
\item a set covering constraint of the form
  $\sum_{l \in \mathcal{L} \setminus \mathcal{L}'} y_l +
  \slackvariable \ge 1$ with a slack variable~\replaced[id=JR,
  comment={we used latin characters for the primal
    throughout}]{$\slackvariable$}{$\sigma$} \replaced[id=JR,
  comment={added the mnemonic for the variable notation}]{whose use is
    essentially \emph{prohibited} by a very large cost coefficient
    $\slackcost$, set to}{whose cost coefficient~$\slackcost$ is} some
  upper bound for the optimal SLDP value; $\mathcal{L}'$ is initially
  empty and will grow throughout the algorithm; this constraint will
  be used to guarantee that newly generated lot-types need to enter
  any solution with a total weight of at least one.
\item a reverse coupling constraint
  $y_l \le \sum_{m \in \mathcal{M}, b \in B}x_{\added[id=SK]{b,l,m}}$ for each lot-type
  $l \in \mathcal{L}$; this constraint ensures that a lot-type is
  only selected if it is assigned to some branch with some
  multiplicity, which will incur some cost~$c_{b,l,m}$.
\end{itemize}
Moreover, we denote by RSLDP (= restricted SLDP) the SLDP restricted
to some smaller set of lot-types, by MP (= master problem) the
linear programming relaxation of the SLDP including the set covering
constraint, by RMP (= restricted master problem) the MP restricted
to some smaller set of columns, and by PP (= pricing problem) the
pricing problem to determine whether there exist columns with
negative reduced costs. 

The restricted master problem (RMP) then reads as follows:
\begin{align}
  \min &&
          \sum_{b\in\mathcal{B}}\sum_{l\in\lottypesatbranch}\sum_{m\in\multsatbranchlottype}
          c_{b,l,m}\cdot x_{b,l,m} + {}&\added[id=JR]{\slackcost \slackvariable}&  && & (\text{duals:})\\
  s.t.  &&
           \label{ie_one_selection_per_branch}
           \sum_{l\in\lottypesatbranch}\sum_{m\in\multsatbranchlottype} x_{b,l,m} &= 1 && \forall b\in\mathcal{B} && (\alpha_b)\\
       &&
          -\sum_{m\in\multsatbranchlottype}x_{b,l,m}+y_l & \ge 0 && \forall b\in\mathcal{B}, l\in\lottypesatbranch && (\beta_{b,l})\\
       &&
          -\sum_{l\in\mathcal{L}'} y_l & \ge -k && && (\gamma)\\
       &&
          \added[id=JR]{\sum_{b \in \mathcal{B}: l \in \lottypesatbranch}\sum_{m \in \multsatbranchlottype}x_{b,l,m}-y_l} & \added[id=JR]{\ge 0} && \added[id=JR]{\forall l\in\mathcal{L}'} && \added[id=JR]{(\delta_l)}\\
       &&
  \label{ie_setcover}
  \added[id=JR]{\sum_{l \in \mathcal{L}'\setminus\mathcal{L}''}y_l + \slackvariable} &\added[id=JR]{\ge \added[id=SK]{1}} && && \added[id=JR]{(\mu)}\\
       && \!\!\!\!\!\!\!\!\!\!\!
          \sum_{b\in\mathcal{B}}\sum_{l\in\lottypesatbranch}\sum_{m\in\multsatbranchlottype}
          m \cdot \lvert l \rvert \cdot x_{b,l,m} &\ge \underline{I} && && \added[id=JR]{(\phi)}\\
       && \!\!\!\!\!\!\!\!\!\!\!
          -\sum_{b\in\mathcal{B}}\sum_{l\in\lottypesatbranch}\sum_{m\in\multsatbranchlottype}
          m \cdot \lvert l \rvert \cdot x_{b,l,m} &\ge -\overline{I} && && \added[id=JR]{(\psi)}\\
       &&
          x_{b,l,m} & \ge 0 && \forall b\in\mathcal{B},\notag\\
       &&&&& \forall l\in\lottypesatbranch,\notag\\
       &&&&& \forall m\in\multsatbranchlottype \\
       && y_l & \ge 0 && \forall l\in\mathcal{L}'.
                         \intertext{\added[id=JR]{Using the indicated dual variables,} the dual restricted master
                         problem (DRMP) is then given by:}
                         \max && \sum_{b\in\mathcal{B}}\alpha_b\,-k\gamma+\underline{I}\phi-\overline{I}\psi+\added[id=JR]{\mu}
                                                                                      && && & \added[id=JR]{(primals:)}\\
  s.t.  && \alpha_b-\beta_{b,l} \added[id=JR]{{} + \delta_l}+m\lvert l \rvert \phi-m\lvert l \rvert \psi &\le c_{b,l,m}
                                                                                && \forall b\in\mathcal{B},\notag\\
       &&&&&\forall l\in\lottypesatbranch,\notag\\
       &&&&&\forall m\in\multsatbranchlottype && \added[id=JR]{(x_{blm})}\label{dual_ie_1}\\
       && \sum_{b\in\mathcal{B}\,:\,l\in\lottypesatbranch}
          \beta_{b,l} \added[id=JR]{{} - \gamma - \delta_l} + \added[id=JR]{\mu} & \le
                                                                     0
                                                                                && \forall l\in\mathcal{L}' && \added[id=JR]{(y_l)}\label{dual_ie_2}\\
       &&
          \added[id=JR]{\added[id=JR]{\mu}} &\added[id=JR]{\le \slackcost} && && \added[id=JR]{(\slackvariable)}\label{dual_ie_3}\\
       &&
          \alpha_b & \in\mathbb{R} && \forall b\in\mathcal{B}\\
       &&
          \beta_{b,l} & \ge 0 && \forall b\in \mathcal{B}, l\in\lottypesatbranch\\
       &&
          \added[id=JR]{\delta_{l}} & \added[id=JR]{\ge 0} && \added[id=JR]{\forall
                                                l\in\mathcal{L}'}\\
       &&
          \gamma, \phi, \psi & \ge 0 && \\
       && \added[id=JR]{\mu} &\ge 0&& \deleted[id=JR]{\forall i\in\mathcal{I} \textbf{???}}.
\end{align}

In the following, we name all constraints by the names of the
variables dual to them.  That is, the DMP consists of
\replaced[id=JR, comment={Added $p$}]{$x$-, $y$-, and $\slackvariable$-constraints}{$x$-
  and $y$-constraints} whereas the MP consists of $\alpha$-, $\beta$-,
$\gamma$-, $\delta$-, $\mu$-, $\phi$-, and $\psi$-constraints.

The pricing problem (PP) is defined by finding those constraints in
the dual (DMP) of the unrestricted master problem (MP) that are most
violated in the DMP by the current solution of the DRMP.

Note that each feasible solution of the RMP induces a feasible
solution of the MP via a \emph{lifting} by zeroes in all missing
components.  Whenever the addition of variables to the RMP requires
the introduction of new constraints (like in our case), \added[id=JR,
comment={adapted this part to the previous changes}]{any exact
  formulation of} the \deleted[id=JR, comment={omit needless
  words}]{exact} pricing problem \deleted[id=JR, comment={removed
  obsolete reference}]{from Section~\ref{sec:modelling}} must be based
on a \emph{lifting} of the dual variables.  The following notions make
the formulation of our main result easier.
\begin{definition}
  Let the RMP contain \added[id=JR, comment={added $p$}]{the slack
    variable~$p$} and the variables and constraints for all lot-types
  $l \in \mathcal{L}'$ and assignments $(b, l, m)$ for all
  $b \in \mathcal{B}$,
  $l \in \lottypesatbranch \subseteq \mathcal{L}'$, and
  $m \in \multsatbranchlottype \subseteq \mathcal{M}$. At times we use
  for this the short-hand notation
  $(\mathcal{L}', L, M) := \bigl\{(b, l, m) : b \in \mathcal{B}, l \in
  \lottypesatbranch $\added[id=JR, comment={the lot-type set must
    appear somewhere}]{${} \subseteq \mathcal{L}'$},
  $m \in \multsatbranchlottype\}$, in which we consider $L$ and $M$ as
  set-valued functions. Moreover, let $(x, y\added[id=JR]{, p})$ be a
  feasible solution for the RMP and
  $(\alpha, \beta, \gamma, \delta, \mu, \phi, \psi)$ a feasible
  solution for the DRMP with identical objective value.  In
  particular, $(x, y\added[id=JR]{, p})$ and
  $(\alpha, \beta, \gamma, \delta, \mu, \phi, \psi)$ are optimal for
  the RMP and the DRMP, respectively.
  
  The \emph{canonical lifting} of
  $(x, y\added[id=JR]{, p})$ in the RMP is a
  complete set of primal variables
  $\bigl(\bar{x}, \bar{y}\added[id=JR]{, \bar{p}}\bigr)$ for all
  $b \in \mathcal{B}$, $l \in \mathcal{L}$, and $m \in \mathcal{M}$
  that arises from $(x, y\added[id=JR]{, p})$ by adding zeroes in the
  missing components.  Similarly, the \emph{canonical lifting} of the
  dual variables $(\alpha, \beta, \gamma, \delta, \mu, \phi, \psi)$ in
  the DRMP is a complete set of dual variables
  $(\bar{\alpha}, \bar{\beta}, \bar{\gamma}, \bar{\delta}, \bar{\mu},
  \bar{\phi}, \bar{\psi})$ for all $b \in \mathcal{B}$,
  $l \in \mathcal{L}$, and $m \in \mathcal{M}$ that arises from
  $(\alpha, \beta, \gamma, \delta, \mu, \phi, \psi)$ by adding zeroes
  in the missing components.
  
  A \emph{cost-invariant lifting} of the dual variables
  $(\alpha, \beta, \gamma, \delta, \mu, \phi, \psi)$ in the DRMP is a
  complete set of dual variables
  $(\hat{\alpha}, \hat{\beta}, \hat{\gamma}, \hat{\delta}, \hat{\mu},
  \hat{\phi}, \hat{\psi})$ with the following properties:
  \begin{enumerate}[(i)]
  \item The restriction of
    $(\hat{\alpha}, \hat{\beta}, \hat{\gamma}, \hat{\delta}, \hat{\mu},
    \hat{\phi}, \hat{\psi})$ to $(\mathcal{L}', \lottypesatbranchfunction, \multsatbranchlottypefunction)$ equals
    $(\alpha, \beta, \gamma, \delta, \mu, \phi, \psi)$.
  \item The objective in the DMP of
    $(\hat{\alpha}, \hat{\beta}, \hat{\gamma}, \hat{\delta}, \hat{\mu},
    \hat{\phi}, \hat{\psi})$ equals the objective in the DRMP of
    $(\alpha, \beta, \gamma, \delta, \mu, \phi, \psi)$.
  \end{enumerate}
  The \emph{(uncompensated) reduced cost $\bar{c}_{b, l, m}$} of a
  variable $x_{b, l, m}$ is defined as its reduced cost with respect
  to the canonical lifting, i.e.,
  \begin{equation}
    \bar{c}_{b, l, m} := c_{b, l, m} - \bar{\alpha}_b +
    \bar{\beta}_{b, l} - \bar{\delta}_l - m \abs{l}
    (\bar{\phi} - \bar{\psi}).
  \end{equation}
  Given a cost-invariant lifting
  $(\hat{\alpha}, \hat{\beta}, \hat{\gamma}, \hat{\delta}, \hat{\mu},
  \hat{\phi}, \hat{\psi})$, the \emph{compensated reduced cost
    $\hat{c}_{b, l, m}$} of a variable $x_{b, l, m}$ is defined as
  its reduced cost with respect to the cost-invariant lifting, i.e.,
  \begin{equation}
    \hat{c}_{b, l, m} := c_{b, l, m} - \hat{\alpha}_b +
    \hat{\beta}_{b, l} - \hat{\delta}_l - m \abs{l}
    (\hat{\phi} - \hat{\psi}).
  \end{equation}
  We call a cost-invariant lifting \emph{fully compensating}
  if all $x$-variables have non-negative compensated reduced costs.
\end{definition}
By weak duality, we have the following:
\begin{observation}
  Assume a cost-invariant lifting of dual variables for the DRMP
  is feasible for the DMP.  Then, the canonical lifting of an
  optimal solution to the RMP is optimal for the MP.  Moreover, for any
  cost-invariant lifting we have: A cost-invariant lifting
  satisfies all $x$-constraints if and only if it is fully
  compensating.
\end{observation}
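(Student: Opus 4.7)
My plan is to prove the two parts of the observation by pure LP-duality bookkeeping, since by construction the definitions of cost-invariant lifting and compensated reduced cost were tailored precisely to make the statement a one-step consequence.

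For the first assertion, I would start from an optimal primal-dual pair $(x, y, p)$ and $(\alpha, \beta, \gamma, \delta, \mu, \phi, \psi)$ of the RMP/DRMP with identical objective value (by LP strong duality for the restricted pair). The canonical lifting $(\bar{x}, \bar{y}, \bar{p})$ is feasible for the MP because every MP constraint restricted to the support of $(\bar{x}, \bar{y}, \bar{p})$ either appears identically in the RMP or is trivially satisfied by the added zeroes; moreover, the MP objective of $(\bar{x}, \bar{y}, \bar{p})$ equals the RMP objective of $(x, y, p)$, since zero-variables contribute nothing. By hypothesis, some cost-invariant lifting $(\hat{\alpha}, \hat{\beta}, \hat{\gamma}, \hat{\delta}, \hat{\mu}, \hat{\phi}, \hat{\psi})$ is feasible for the DMP and, by property~(ii) of cost-invariant liftings, has DMP-objective equal to the DRMP-objective of $(\alpha, \beta, \gamma, \delta, \mu, \phi, \psi)$. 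Weak LP duality then squeezes the MP value of $(\bar{x}, \bar{y}, \bar{p})$ between itself and itself, so the canonical primal lifting is MP-optimal.

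For the second assertion, I would simply compare the $x$-constraint of the DMP, namely $\hat{\alpha}_b - \hat{\beta}_{b,l} + \hat{\delta}_l + m\abs{l}(\hat{\phi} - \hat{\psi}) \le c_{b, l, m}$, with the definition $\hat{c}_{b, l, m} := c_{b, l, m} - \hat{\alpha}_b + \hat{\beta}_{b, l} - \hat{\delta}_l - m\abs{l}(\hat{\phi} - \hat{\psi})$. Rearranging shows that the $x$-constraint is algebraically equivalent to $\hat{c}_{b, l, m} \ge 0$, so the conjunction over all triples $(b, l, m)$ with $b \in \mathcal{B}$, $l \in \mathcal{L}$, $m \in \mathcal{M}$ is exactly the fully-compensating property.

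The only part that requires any care is verifying the first claim formally, i.e., that inserting zeroes for absent variables really preserves feasibility in the MP after augmenting the model by the set-covering and reverse-coupling rows. The set-covering row $\sum_{l \in \mathcal{L}' \setminus \mathcal{L}''} y_l + p \ge 1$ is unaffected by lifting (its support lies in $\mathcal{L}'$ already), and the reverse-coupling row $y_l \le \sum_{b, m} x_{b, l, m}$ for $l \in \mathcal{L} \setminus \mathcal{L}'$ is trivially satisfied because both sides vanish. With that check in place, the rest is pure LP weak duality and the definitional identity between $x$-constraints and compensated reduced costs; no combinatorial or problem-specific argument is needed.
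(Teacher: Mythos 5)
Your proposal is correct and follows exactly the route the paper intends: the paper offers no written proof beyond the phrase ``by weak duality,'' and your argument—canonical primal lifting is MP-feasible with unchanged objective, the assumed cost-invariant dual lifting is DMP-feasible with the same value, weak duality closes the gap, and the second claim is the definitional identity between the $x$-constraints of the DMP and nonnegativity of the compensated reduced costs—is precisely that argument spelled out. The feasibility check for the set-covering and reverse-coupling rows under the zero-lifting is a welcome bit of extra care.
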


The (PP) seeks for new variables with minimal reduced costs in the MP
and formally reads as follows for any cost-invariant lifting:
\begin{alignat}{4}
  \min
  \Bigl\{
  &\min \bigl\{ c_{b,l,m} - \hat{\alpha}_b + \hat{\beta}_{b,l} - \hat{\delta}_l - m\abs{l}(\hat{\phi}-\hat{\psi})
  &&:
  b \in \mathcal{B}, l \in \mathcal{L}, m \in \mathcal{M} \bigr\},\notag\\
  &\min \bigl\{  -\sum_{b \in
    \mathcal{B}}\hat{\beta}_{b,l} + \hat{\gamma} + \hat{\delta}_l - \hat{\mu}
  &&:
  l \in \mathcal{L} \bigr\}
  \Bigr\}.
\end{alignat}
Our idea is now to use a special cost-invariant lifting and check whether or
not it is fully compensating and feasible for the DMP.  We use the usual
notations $(x)^+ := \max \{x, 0\}$ and
$(x)^- := \max\{-x, 0\} = -\min \{x, 0\}$\added[id=JR,
comment={clarified the meaning for vectors}]{, componentwise}.
\begin{definition}
  For an optimal solution $(\alpha, \beta, \gamma, \delta, \mu, \phi, \psi)$
  of the DRMP we define the \emph{characteristic lifting} of
  dual variables by
  \begin{align}
    \check{\beta}_{b, l} &:=
                           \begin{cases}
                             \beta_{b, l} & \text{if $l \in \lottypesatbranch$},\\
                             \bigl(\min_{m \in \mathcal{M}} \bar{c}_{b,l,m} \bigr)^- & \text{if
                               $l \in \mathcal{L}' \setminus \lottypesatbranch \cup \mathcal{L}
                               \setminus \mathcal{L}'$},
                           \end{cases}\\
    \check{\delta}_l &:=
                       \begin{cases}
                         \delta_l & \text{if $l \in \mathcal{L}'$},\\
                         \bigl(\min_{\substack{b \in \mathcal{B}\\m \in \mathcal{M}}} \bar{c}_{b,l,m} \bigr)^+
                         & \text{if
                           $l \in \mathcal{L} \setminus \mathcal{L}'$}.
                       \end{cases}                       
  \end{align}
  Moreover, define as an abbreviation
  \begin{equation}
    \check{c}_l :=
    \begin{cases}
      -\sum_{\substack{b \in \mathcal{B}:\\l \in
          \mathcal{L}' \setminus \lottypesatbranch}}
      \check{\beta}_{b, l}
      & \text{if
        $l \in \mathcal{L}' \setminus \bigcap_{b \in \mathcal{B}} \lottypesatbranch$},\\
      -\sum_{b \in \mathcal{B}} \check{\beta}_{b, l}
      +
      \check{\delta}_l
      & \text{if
        $l \in \mathcal{L} \setminus \mathcal{L}'$}.
    \end{cases}        
  \end{equation}

\end{definition}
The importance of this lifting is demonstrated by the following theorem:
\begin{theorem}
  \label{thm:pricing}
  Consider a primal-dual pair of optimal solutions
  $(x, y\added[id=JR]{, p})$ of the RMP and
  $(\alpha, \beta, \gamma, \delta, \mu, \phi, \psi)$ of the DRMP.
  Then, there is a fully-compensating cost-invariant lifting feasible
  for the DMP if and only if the characteristic lifting is fully
  compensating and feasible for the DMP.

  In terms of uncompensated reduced costs this reads: If
  \begin{align}
    \bar{c}_{b,l,m} &\ge 0
    &&\forall b \in \mathcal{B},\notag\\
                    &&&l \in \lottypesatbranch,\notag\\
                    &&&m \in \mathcal{M} \setminus \multsatbranchlottype,\\
    -\sum_{\substack{b \in \mathcal{B}:\\l \in \mathcal{L}' \setminus \lottypesatbranch}}
    \bigl(
    \min_{m \in \mathcal{M}}
    \bar{c}_{b,l,m}
    \bigr)^-
                    &\ge \sum_{\substack{b \in \mathcal{B}:\\l \in \lottypesatbranch}}\beta_{b,l}
    - \gamma - \delta_l + \mu
                    && \forall l \in \mathcal{L}' \setminus \bigcap_{b \in \mathcal{B}} \lottypesatbranch,\\
    -\sum_{b \in \mathcal{B}}
    \bigl(
    \min_{m \in \mathcal{M}}
    \bar{c}_{b,l,m}
    \bigr)^-
    +
    \bigl(
    \min_{\substack{b \in \mathcal{B}\\m \in \mathcal{M}}}
    \bar{c}_{b,l,m}
    \bigr)^+
                    &\ge -\gamma + \mu
    &&\forall l \in \mathcal{L} \setminus \mathcal{L}',
  \end{align}
  then the canonical lifting
  $(\bar{x}, \bar{y}\added[id=JR]{, \bar{p}})$ of
  $(x, y\added[id=JR]{, p})$ is optimal for the MP.

  Moreover, if any of these inequalities is violated, then no cost-invariant
  lifting is fully-compensating and feasible for the DMP.
\end{theorem}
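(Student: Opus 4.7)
The plan is to exploit the fact that the characteristic lifting is, by construction, the pointwise minimizer of the left-hand sides of all $y$-constraints in the DMP among all cost-invariant liftings that satisfy the $x$-constraints. Once this is established, the existence of \emph{any} DMP-feasible, fully-compensating cost-invariant lifting is equivalent to the feasibility of the characteristic one, which in turn amounts exactly to the three explicit inequalities in the statement.

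First I would pin down the free parameters of a cost-invariant lifting. Since neither $\beta$ nor $\delta$ appears in the DMP objective $\sum_{b}\alpha_b - k\gamma + \underline{I}\phi - \overline{I}\psi + \mu$, condition~(ii) of cost-invariance is automatic as soon as condition~(i) forces $\hat{\alpha}_b = \alpha_b$, $\hat{\beta}_{b, l} = \beta_{b, l}$ for $l \in L(b)$, $\hat{\delta}_l = \delta_l$ for $l \in \mathcal{L}'$, and the scalars $\hat{\gamma}, \hat{\mu}, \hat{\phi}, \hat{\psi}$ to equal their DRMP values. Hence only $\hat{\beta}_{b, l}$ for $l \notin L(b)$ and $\hat{\delta}_l$ for $l \notin \mathcal{L}'$ remain to be chosen. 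For $l \in L(b)$ and $m \in \mathcal{M} \setminus M(b, l)$, the compensated and the uncompensated reduced costs coincide, so $\bar{c}_{b, l, m} \ge 0$ is \emph{necessary} for every cost-invariant lifting; this is the first inequality. For $l \notin L(b)$, the $x$-constraint rearranges to $\hat{\beta}_{b, l} \ge \max\bigl(0,\, \hat{\delta}_l - \min_{m \in \mathcal{M}} \bar{c}_{b, l, m}\bigr)$, and a short case distinction on the sign of $\min_m \bar{c}_{b, l, m}$ confirms that the characteristic values $\check{\beta}_{b, l} = (\min_m \bar{c}_{b, l, m})^-$ attain this lower bound whenever $\hat{\delta}_l$ is chosen as in the characteristic lifting.

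Next I would verify that the characteristic lifting also minimizes the left-hand side of each $y$-constraint among $x$-feasible cost-invariant liftings. For $l \in \mathcal{L}' \setminus \bigcap_{b} L(b)$, $\hat{\delta}_l = \delta_l$ is already fixed, so the pointwise-minimal $\hat{\beta}_{b, l}$ produces the $y$-LHS $\sum_{b:\,l \in L(b)}\beta_{b, l} + \sum_{b:\,l \notin L(b)}(\min_m \bar{c}_{b, l, m})^-$, and requiring this to be at most $\gamma + \delta_l - \mu$ is precisely the second inequality. For $l \in \mathcal{L} \setminus \mathcal{L}'$, $\hat{\delta}_l \in [0, \infty)$ is free as well, and after inserting the pointwise-minimal $\hat{\beta}_{b, l}$ the problem reduces to minimizing the piecewise-linear function $f(\hat{\delta}) := \sum_{b \in \mathcal{B}} (\hat{\delta} - a_b)^+ - \hat{\delta}$ with $a_b := \min_m \bar{c}_{b, l, m}$. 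Its slope $\lvert \{b \in \mathcal{B} : a_b < \hat{\delta}\} \rvert - 1$ starts at $-1$ and is non-decreasing in $\hat{\delta}$, so the minimum on $[0, \infty)$ is attained at $\hat{\delta} = (\min_b a_b)^+ = \check{\delta}_l$; inserting this value into the $y$-constraint recovers the third inequality.

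Finally I would invoke the observation preceding the theorem to close the loop. If all three inequalities hold, the characteristic lifting is cost-invariant, satisfies every $x$-constraint and every $y$-constraint, and the $p$-constraint is inherited from the DRMP; hence it is fully compensating and DMP-feasible, so the canonical lifting of $(x, y, p)$ is MP-optimal. Conversely, if some fully-compensating DMP-feasible cost-invariant lifting exists, the first inequality follows because it is independent of the free parameters, and by the minimality established above the second and third inequalities follow as well. The main obstacle I anticipate is the piecewise-linear minimization in the third step: although the slope analysis is elementary, the two cases of the characteristic definition of $\check{\delta}_l$ and the interplay of $(\cdot)^+$ and $(\cdot)^-$ in the stated inequalities demand careful bookkeeping; the remaining ingredients are routine, if notationally dense, rewritings of the DMP constraints.
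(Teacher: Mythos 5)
Your proof is correct and rests on the same quantitative facts as the paper's, but it organizes them around a different and arguably cleaner principle: you first show that, among all cost-invariant liftings satisfying the $x$-constraints, the characteristic lifting is the pointwise minimizer of every $y$-constraint left-hand side, and then both directions of the equivalence (and the three explicit inequalities) drop out at once. The paper instead argues by direct inequality chains: it plugs the characteristic lifting in for sufficiency and, for necessity, bounds an arbitrary fully-compensating feasible lifting from below via $\hat{\beta}_{b,l} \ge \bigl(\min_{m}\bar{c}_{b,l,m}\bigr)^-$ and a two-case analysis on the sign of $\bar{c}_{b^*,l,m^*}$ for $l \in \mathcal{L}\setminus\mathcal{L}'$. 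Your convex piecewise-linear minimization of $f(\hat{\delta}) = \sum_{b}(\hat{\delta}-a_b)^+ - \hat{\delta}$ over $\hat{\delta}\ge 0$, with minimizer $(\min_b a_b)^+ = \check{\delta}_l$, is exactly equivalent to that case distinction but explains \emph{why} $\check{\delta}_l$ is the right choice rather than merely verifying it; your observation that cost-invariance condition~(ii) is automatic once condition~(i) holds is also a point the paper uses only implicitly. One small notational caveat: your rearranged $x$-constraint $\hat{\beta}_{b,l} \ge \max\bigl(0,\hat{\delta}_l - \min_m \bar{c}_{b,l,m}\bigr)$ is literally valid only for $l \in \mathcal{L}\setminus\mathcal{L}'$; for $l \in \mathcal{L}'\setminus\lottypesatbranch$ the canonical lifting already carries $\bar{\delta}_l = \delta_l$ inside $\bar{c}_{b,l,m}$, so the correct bound is $\hat{\beta}_{b,l} \ge \bigl(\min_m \bar{c}_{b,l,m}\bigr)^-$ with no $\hat{\delta}_l$ term. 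Since $\hat{\delta}_l = \delta_l$ is forced there anyway and your final inequalities are the right ones, this is a conflation of the two cases rather than a gap, but it should be stated separately for the two regimes.
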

\begin{proof}
  Consider first the situation in which all inequalities listed in the
  theorem are satisfied, which is straight-forwardly equivalent to the
  characteristic lifting being fully compensating and feasible for the
  DMP. Moreover, by cost-invariance the characteristic lifting has the
  same objective as $(x, y\added[id=JR]{, p})$ and, therefore, the
  same objective as $(\bar{x}, \bar{y}\added[id=JR]{, \bar{p}})$.
  Thus, $(\bar{x}, \bar{y}\added[id=JR]{, \bar{p}})$ is optimal for
  the MP.

  \replaced[id=JR,comment={better line-break}]{Consider now}{Next
    consider} any fully compensating cost-invariant lifting
  $(\hat{\alpha}, \hat{\beta}, \hat{\gamma}, \hat{\delta}, \hat{\mu},
  \hat{\phi}, \hat{\psi})$ of
  $(\alpha, \beta, \gamma, \delta, \mu, \phi, \psi)$ of the DRMP that
  is feasible for the DMP.  We will show that then all conditions of
  the inequalities listed in the theorem are satisfied, i.e., the
  characteristic lifting is fully compensating and feasible for the
  DMP as well. By the definition of a lifting we know that
  $\hat{\alpha} = \alpha$, $\hat{\gamma} = \gamma$, $\hat{\mu} = \mu$,
  $\hat{\phi} = \phi$, and $\hat{\psi} = \psi$.  Since the lifting is
  fully compensating, the compensated reduced costs for all
  $x$-variables are non-negative.  In particular, the first inequality
  in the theorem is satisfied, since in that case
  $\hat{c}_{b, l, m} = \bar{c}_{b, l, m}$.

  Consider first the case $b \in \mathcal{B}$,
  $l \in \mathcal{L}' \setminus \lottypesatbranch$, and $m \in \mathcal{M}$.
  Now $\hat{\delta}_l = \delta_l$ must hold by
  the lifting property.
  We have
  \begin{equation}
    c_{b, l, m} - \alpha_b + \hat{\beta}_{b, l} - \hat{\delta}_l - m
    \abs{l} (\phi - \psi)
    = \bar{c}_{b, l, m} + \hat{\beta}_{b, l} \ge 0.
  \end{equation}
  From this, it follows that for all $b \in \mathcal{B}$ that
  \begin{equation}
    \hat{\beta}_{b, l} \ge \max_{m \in \mathcal{M}} (-\bar{c}_{b, l, m})
    = -\min_{m \in \mathcal{M}} \bar{c}_{b, l, m}.
  \end{equation}
  Since the given lifting is feasible for the DMP, this implies (using $\hat{\beta}_{b, l} \ge 0$)
  for all $l \in \mathcal{L}' \setminus \lottypesatbranch$:
  \begin{align}
    0
    &\ge \sum_{b \in \mathcal{B}} \hat{\beta}_{b, l} - \gamma -
      \hat{\delta}_l + \mu\notag\\
    &\ge
      \sum_{\substack{b \in \mathcal{B}:\\b \in \mathcal{L}' \setminus \lottypesatbranch}}
    \hat{\beta}_{b, l}
    +
    \sum_{\substack{b \in \mathcal{B}:\\b \in \lottypesatbranch}}
    \beta_{b, l}
    - \gamma - \delta_l + \mu\notag\\
    &=
      \sum_{\substack{b \in \mathcal{B}:\\b \in \mathcal{L}' \setminus \lottypesatbranch}}
    \max\bigl\{0, \hat{\beta}_{b, l}\bigr\}
    + \sum_{\substack{b \in \mathcal{B}:\\b \in \lottypesatbranch}}
    \beta_{b, l}
    - \gamma - \delta_l + \mu\notag\\
    &\ge
      \sum_{\substack{b \in \mathcal{B}:\\b \in \mathcal{L}' \setminus \lottypesatbranch}}
    \max \bigl\{0, -\min_{m \in \mathcal{M}} \bar{c}_{b, l, m}\bigr\}
    + \sum_{\substack{b \in \mathcal{B}:\\b \in \lottypesatbranch}}
    \beta_{b, l}
    - \gamma - \delta_l + \mu\notag\\
    &\ge
      \sum_{\substack{b \in \mathcal{B}:\\b \in \mathcal{L}' \setminus \lottypesatbranch}}
    \bigl(\min_{m \in \mathcal{M}} \bar{c}_{b, l, m}\bigr)^-
    + \sum_{\substack{b \in \mathcal{B}:\\b \in \lottypesatbranch}}
    \beta_{b, l}
    - \gamma - \delta_l + \mu.
  \end{align}
  This is equivalent to the second inequality in the theorem.  Consider next
  the case $b \in \mathcal{B}$, $l \in \mathcal{L} \setminus \mathcal{L}'$,
  and $m \in \mathcal{M}$.  We have
  \begin{equation}
    c_{b, l, m} - \alpha_b + \hat{\beta}_{b, l} - \hat{\delta}_l - m
    \abs{l} (\phi - \psi)
    = \bar{c}_{b, l, m} + \hat{\beta}_{b, l} - \hat{\delta}_l \ge 0.
  \end{equation}
  Let $(b^*, m^*)$ be a minimizer in
  $\min_{\substack{b \in \mathcal{B}, m \in \mathcal{M}}}\bar{c}_{b,
    l, m} = \bar{c}_{b^*, l, m^*}$.  \replaced[id=JR,comment={better
    line-break}]{That is}{In particular},
  $\bar{c}_{b^*, l, m^*} \le \min_{m \in \mathcal{M}} \bar{c}_{b, l,
    m}$ for all $b \in \mathcal{B}$.  We consider the cases
  $\bar{c}_{b^*, l, m^*} \le 0$ and $\bar{c}_{b^*, l, m^*} > 0$.  In
  the first case, since the lifting is fully compensating, we have
  $\hat{\beta}_{b, l} - \hat{\delta}_l \ge -\min_{m \in
    \mathcal{M}}\bar{c}_{b, l, m}$ for all $b \in \mathcal{B}$, in
  particular
  \begin{align}
    \hat{\beta}_{b^*, l} - \hat{\delta}_l
    &\ge
      -c_{b^*, l, m^*} \ge 0,\\
    \hat{\beta}_{b, l}
    &\ge
      -\min_{m \in \mathcal{M}}\bar{c}_{b, l, m}.
  \end{align}
  We conclude for all $l \in \mathcal{L} \setminus \mathcal{L}'$, using
  $\hat{\delta}_l \ge 0$, $\hat{\beta}_{b, l} \ge 0$:
  \begin{align}
    0
    &\ge \sum_{b \in \mathcal{B}} \hat{\beta}_{b, l} - \gamma - \hat{\delta}_l + \mu\notag\\
    &\ge \sum_{b \in \mathcal{B} \setminus \{b^*\}} \hat{\beta}_{b, l} 
      + \bigl(\hat{\beta}_{b^*, l}  - \hat{\delta}_l\bigr)
      - \gamma + \mu\notag\\
    &= \sum_{b \in \mathcal{B} \setminus \{b^*\}}
      \max\bigl\{0, \hat{\beta}_{b, l}\bigr\}
      + \max\{0, \hat{\beta}_{b^*, l}  - \hat{\delta}_l\}
      - \gamma + \mu\notag\\
    &\ge \sum_{b \in \mathcal{B} \setminus \{b^*\}}
      \max\bigl\{0, -\min_{m \in \mathcal{M}} \bar{c}_{b, l, m}\bigr\}
      + \max\bigl\{0, -\min_{m \in \mathcal{M}} \bar{c}_{b^*, l, m}\bigr\}
      - \gamma + \mu\notag\\
    &= \sum_{b \in \mathcal{B}} \max\bigl\{0, -\min_{m \in \mathcal{M}} \bar{c}_{b, l, m}\bigr\} - \gamma + \mu\notag\\
    &= \sum_{b \in \mathcal{B}} \bigl(\min_{m \in \mathcal{M}} \bar{c}_{b, l, m}\bigr)^- - \gamma + \mu.
  \end{align}
  This is equivalent to the third inequality of the theorem in the case
  $\bar{c}_{b^*, l, m^*} \le 0$.  In the case $\bar{c}_{b^*, l, m^*} > 0$, we use
  that
  \begin{align}
    0 \le \hat{\delta}_l
    &\le
      \min_{\substack{b \in \mathcal{B}\\m \in \mathcal{M}}}
    \bigl(\bar{c}_{b, l, m}
    + \hat{\beta}_{b, l}
    \bigr)
    =
    \bar{c}_{b^*, l, m^*} + \hat{\beta}_{b^*, l},\\
    \hat{\beta}_{b, l}
    &\ge
      -\min_{m \in \mathcal{M}}\bar{c}_{b, l, m}.
  \end{align}
  Then, again using $\hat{\beta}_{b, l} \ge 0$ and
  $\min_{m \in \mathcal{M}} \bar{c}_{b, l, m} \ge \bar{c}_{b^*, l,
    m^*} > 0$, we get:
  \begin{align}
    0
    &\ge \sum_{b \in \mathcal{B}} \hat{\beta}_{b, l} - \gamma - \hat{\delta}_l + \mu\notag\\
    &\ge \sum_{b \in \mathcal{B}} \hat{\beta}_{b, l} - \bigl(\bar{c}_{b^*, l,
      m^*} + \hat{\beta}_{b^*, l}\bigr) - \gamma  + \mu\notag\\
    &= \sum_{b \in \mathcal{B} \setminus \{b^*\}} \max \bigl\{0, \hat{\beta}_{b, l}\bigr\}
      - \bar{c}_{b^*, l, m^*} - \gamma + \mu\notag\\ 
    &\ge  \sum_{b \in \mathcal{B} \setminus \{b^*\}} \max \bigl\{0, -\min_{m \in \mathcal{M}}
      \bar{c}_{b, l, m}\bigr\} - \bar{c}_{b^*, l, m^*} - \gamma + \mu\notag\\ 
    &=  -\bigl(\bar{c}_{b^*, l, m^*} \bigr)^+ -\gamma + \mu.
  \end{align}
  This is equivalent to the third inequality of the theorem in the case
  $\bar{c}_{b^*, l, m^*} > 0$.  Together, these two cases imply the third
  inequality in the theorem.  Summarized, the characteristic lifting is feasible as
  well, as claimed.
\end{proof}
Clearly, we will generate new columns with respect to the compensated reduced
costs with respect to the characteristic lifting.  Then, only complete sets of
columns will be generated whose negative reduced costs cannot be fully
compensated by \emph{any} lifting.  We call such a variable set
\emph{promising}.

The individual $x_{b, l, m}$ with $\bar{c}_{b, l, m} < 0$ for which
$l \in \lottypesatbranch$ and $m \in \mathcal{M} \setminus \multsatbranchlottype$ form
single-element promising sets of variables because for those we have
$\hat{c}_{b, l, m} = \bar{c}_{b l, m}$, i.e., no lifting can compensate their
negative uncompensated reduced costs.

Another promising set of variables stems from any fixed
$l \in \mathcal{L}' \setminus \bigcap_{b \in \mathcal{B}} \lottypesatbranch$ for which
\begin{equation}
  \check{c}_l = -\sum_{\substack{b \in \mathcal{B}:\\l \in \mathcal{L}' \setminus \lottypesatbranch}}
  \bigl( \min_{m \in \mathcal{M}} \bar{c}_{b,l,m} \bigr)^- < \sum_{\substack{b
      \in \mathcal{B}:\\l \in \lottypesatbranch}}\beta_{b,l} - \gamma - \delta_l + \mu.
\end{equation}
The set contains all $x_{b, l, m}$ with
$\min_{m \in \mathcal{M}} \bar{c}_{b,l,m} < 0$.

And finally, we have a promising set of variables from any
$l \in \mathcal{L} \setminus \mathcal{L}'$ for which
\begin{equation}
  \check{c}_l = -\sum_{b \in \mathcal{B}} \bigl( \min_{m \in \mathcal{M}} \bar{c}_{b,l,m}
  \bigr)^- + \bigl( \min_{\substack{b \in \mathcal{B}\\m \in \mathcal{M}}}
  \bar{c}_{b,l,m} \bigr)^+ < -\gamma + \mu.
\end{equation}
The set contains all $x_{b, l, m}$ with
$\min_{m \in \mathcal{M}} \bar{c}_{b,l,m} < 0$ plus the new variable $y_l$.

Note, that the resulting mechanism is very natural, since it essentially
enforces a \emph{coordinated} generation of new variables taking the quality
of an assignment into account with respect to all branches at the same time.

We complete our consideration by two lower bounds implied by using
essentially two non-cost-invariant fully compensating \replaced[id=JR,
comment={grammar correction}]{shifts}{shift} of dual variables (that
is no lifting, by the way) that \replaced[id=JR, comment={grammar
  correction}]{are}{is} feasible for the DMP.

The first idea is to shift the $\alpha$-variables of the DMP so as
to compensate all the negative reduced costs of variables
$x_{b, l, m}$ with $b \in \mathcal{B}$, $l \in \lottypesatbranch$,
and $m \in \mathcal{M} \setminus \multsatbranchlottype$.  The result
is that for all such $(b, l, m)$ the $x$-constraints of the DMP are
feasible.  The second idea is to shift the $\gamma$-variable of the
DMP so as to compensate the violation of the $y$-constraints, while
the reduced costs of the variables $x_{b, l, m}$ for
$b \in \mathcal{B}$,
$l \in \mathcal{L} \setminus \lottypesatbranch \cup \mathcal{L}
\setminus \mathcal{L}'$, and $m \in \mathcal{M}$ are fully
compensated by the characteristic lifting.

More formally, define the minimal negative reduced costs of any
promising variable $x_{b, l, m}$ with $b \in \mathcal{B}$,
$l \in \lottypesatbranch$, and
$m \in \mathcal{M} \setminus \multsatbranchlottype$ as
\begin{equation}
  \bar{c}^*_b := -\bigl(\min_{\substack{l \in \lottypesatbranch\\m \in
      \multsatbranchlottype}} \bar{c}_{b, l, m} \bigr)^-
  \le 0.
\end{equation}
Next, define the maximal violation of the $y$-constraint in the DMP
by the characteristic lifting as
\begin{equation}
  \bar{d}^* := -\bigl( \min_{\substack{l \in \mathcal{L}' \setminus
      \bigcap_{b \in \mathcal{B}}\lottypesatbranch \cup \mathcal{L} \setminus \mathcal{L}'\\
      m \in \multsatbranchlottype}}
  -\sum_{b \in \mathcal{B}} \check{\beta}_{b, l}
  + \check{\delta}_l
  + \gamma - \mu
  \bigr)^- \le 0.
\end{equation}

Next, we define a shift of the characteristic lifting by
\begin{align}
  \tilde{\alpha}_b     &:= \alpha_b + \bar{c}^*_b,\\
  \tilde{\beta}_{b, l} &:= \check{\beta}_{b, l},\\
  \tilde{\gamma}      &:= \gamma - \bar{d}^*\\
  \tilde{\delta}_{l}  &:= \check{\delta}_{l},\\
  \tilde{\mu}         &:= \mu,\\
  \tilde{\phi}         &:= \phi,\\
  \tilde{\psi}         &:= \psi.
\end{align}
We call this the \emph{lower-bound shift of the characteristic
  lifting}.
\begin{theorem}
  \label{thm:bound}
  The lower-bound shift of the characteristic lifting is feasible
  for the DMP.  Moreover, it changes the objective function value
  of the characteristic lifting by $\sum_{b \in \mathcal{B}}
  \bar{c}^*_b + k\bar{d}^*$.  In other words, if $z^{\mathit{RMP}}$
  is the optimal value of the RMP and $z^{\textit{MP}}$ is the
  optimal value of the MP, then we have
  \begin{equation}
    z^{\textit{MP}} \ge z^{\textit{CSB}} :=
    z^{\textit{RMP}} + \sum_{b \in \mathcal{B}} \bar{c}^*_b + k\bar{d}^*.
  \end{equation}
\end{theorem}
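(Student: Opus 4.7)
The plan is to verify that the lower-bound shift $(\tilde{\alpha}, \tilde{\beta}, \tilde{\gamma}, \tilde{\delta}, \tilde{\mu}, \tilde{\phi}, \tilde{\psi})$ is feasible for the DMP, then compute the change in objective value directly, and finally invoke weak LP duality. Since only $\alpha$ and $\gamma$ are modified relative to the characteristic lifting, it suffices to check each block of DMP constraints and confirm that the respective shift exactly compensates the potential violations identified earlier.

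First I would check all $x$-constraints in the DMP by distinguishing the cases according to whether $l \in \lottypesatbranch$ or not, and whether $l \in \mathcal{L}'$ or not. If $l \in \lottypesatbranch$, then $\check{\beta}_{b, l} = \beta_{b, l}$ and $\check{\delta}_l = \delta_l$, so the compensated reduced cost equals $\bar{c}_{b, l, m} - \bar{c}^*_b$. For $m \in \multsatbranchlottype$, $\bar{c}_{b, l, m} \ge 0$ by DRMP optimality; for $m \in \mathcal{M} \setminus \multsatbranchlottype$, $\bar{c}_{b, l, m} - \bar{c}^*_b \ge 0$ by the very definition of $\bar{c}^*_b$. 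For $l \in \mathcal{L}' \setminus \lottypesatbranch$, a short computation yields $\bar{c}_{b, l, m} + \check{\beta}_{b, l} \ge \min_{m'} \bar{c}_{b, l, m'} + (\min_{m'} \bar{c}_{b, l, m'})^- \ge 0$, and the additional shift $-\bar{c}^*_b \ge 0$ preserves this. The remaining case $l \in \mathcal{L} \setminus \mathcal{L}'$ is the most delicate and mimics the case analysis in the proof of Theorem~\ref{thm:pricing}: splitting on the sign of $\min_{b', m'} \bar{c}_{b', l, m'}$, one sees that $\check{\beta}_{b, l} - \check{\delta}_l$ suffices to make the uncompensated reduced cost non-negative, and again adding $-\bar{c}^*_b \ge 0$ does no harm.

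Second I would handle the $y$- and $\slackvariable$-constraints. The $\slackvariable$-constraint $\tilde{\mu} \le \slackcost$ holds trivially because $\mu$ is not shifted. For the $y$-constraints of the DMP, those with $l \in \bigcap_{b \in \mathcal{B}} \lottypesatbranch$ coincide with the DRMP $y$-constraints (since all $\check{\beta}_{b, l} = \beta_{b, l}$) and are therefore already satisfied. The remaining indices $l \in \mathcal{L}' \setminus \bigcap_b \lottypesatbranch \cup \mathcal{L} \setminus \mathcal{L}'$ are exactly those appearing in the definition of $\bar{d}^*$, and the shift $\tilde{\gamma} = \gamma - \bar{d}^*$ adds the constant $-\bar{d}^* \ge 0$ (the maximum violation, in absolute value) to the slack of each such $y$-constraint, which restores feasibility across the board.

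Finally, the objective change is a direct calculation: since $\tilde{\phi}, \tilde{\psi}, \tilde{\mu}$ are unchanged, one obtains $\sum_b \tilde{\alpha}_b - k \tilde{\gamma} - (\sum_b \alpha_b - k \gamma) = \sum_b \bar{c}^*_b + k \bar{d}^*$, and since the characteristic lifting has DMP objective equal to $z^{\textit{RMP}}$ by cost-invariance, the lower-bound shift has DMP objective $z^{\textit{RMP}} + \sum_b \bar{c}^*_b + k \bar{d}^*$. Weak LP duality for the feasible DMP-solution now delivers $z^{\textit{MP}} \ge z^{\textit{CSB}}$. I expect the main obstacle to lie in the $x$-constraints for $l \in \mathcal{L} \setminus \mathcal{L}'$, where the coordination between $\check{\beta}_{b, l}$ and $\check{\delta}_l$ depends sensitively on the sign of $\min_{b', m'} \bar{c}_{b', l, m'}$ and essentially replays the argument of Theorem~\ref{thm:pricing}; all other checks reduce to substitution and are routine.
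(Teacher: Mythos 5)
Your proposal is correct and follows essentially the same route as the paper's proof: a block-by-block feasibility check of the shifted duals (where the definitions of $\bar{c}^*_b$ and $\bar{d}^*$ supply exactly the needed compensation for the $x$- and $y$-constraints, respectively), followed by the direct objective computation and weak duality. The only difference is cosmetic: you re-derive the full-compensation property of the characteristic lifting for the $x$-constraints with $l \notin \lottypesatbranch$, which the paper simply cites, and you leave implicit the (trivial) check that $\tilde{\gamma} = \gamma - \bar{d}^* \ge 0$, which the paper states explicitly.
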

\begin{proof}
  Since $\bar{d}^* \le 0$ we have that $\tilde{\gamma}$ is
  non-negative.  Note, that there is no non-negativity constraint
  for~$\alpha_b$.  We consider the $x$-constraints in two steps:
  \begin{enumerate}
  \item The $x$-constraints with $b \in \mathcal{B}$,
    $l \in \lottypesatbranch$, and $m \in \mathcal{M}$ read:
    \begin{align}
      \tilde{\alpha}_b - \tilde{\beta}_{b,l} + \tilde{\delta}_l+ m\abs{l}
      (\tilde{\phi} - \tilde{\psi})
      &\le c_{b,l,m}\notag\\
      \iff
      \alpha_b + \bar{c}^*_b - \beta_{b,l} + \delta_l
      +
      m\abs{l}
      (\phi - \psi)
      &\le c_{b,l,m}\\
      \iff
      \bar{c}_{b, l, m} = 
      c_{b, l, m} - \alpha_b + \beta_{b,l} - \delta_l
      -
      m\abs{l}
      (\phi - \psi)
      &\ge \bar{c}^*_b,
    \end{align}
    which holds by the minimality of $\bar{c}^*_b$.
  \item The $x$-constraints for all $(b, l, m)$ with
    $l \in \mathcal{L}' \setminus \lottypesatbranch \cup \mathcal{L}
    \setminus \mathcal{L}'$ are satisfied
    since the characteristic lifting is fully compensating, and its
    lower-bound shift relaxes the $x$-constraints.
  \end{enumerate}
  Next we consider the $y$-constraints.
  \begin{enumerate}
  \item The $y$-constraints with
    $l \in \bigcap_{b \in \mathcal{B}}\lottypesatbranch$ are satisfied
    by the \added[id=SK]{characteristic} lifting, since they are identical to
    those in the DRMP, and there they are satisfied by the lifting
    property.  The shift of $\gamma$ relaxes the $y$-constraints, so
    that also the lower-bound shift satisfies these $y$-constraints.
  \item The $y$-constraints with
    $l \in \mathcal{L}' \setminus \bigcap_{b \in
      \mathcal{B}}\lottypesatbranch \cup \mathcal{L} \setminus
    \mathcal{L}'$ read:
    \begin{align}
      \sum_{\substack{b \in \mathcal{B}}}
      \tilde{\beta}_{b,l} - \tilde{\gamma} - \tilde{\delta}_l + \tilde{\mu}
      &\le 0,\\
      \iff
      \sum_{\substack{b \in \mathcal{B}}}
      \check{\beta}_{b,l} - (\gamma - \bar{d}^*) - \check{\delta}_l + \mu
      &\le 0,\\
      \iff
      -\sum_{\substack{b \in \mathcal{B}}}
      \check{\beta}_{b,l} - \gamma - \check{\delta}_l + \mu
      &\ge \bar{d}^*,
    \end{align}
    which holds by the minimality of $\bar{d}^*$.
  \end{enumerate}
  Thus, the lower-bound shift of the characteristic lifting is
  feasible for the DMP.  Plugging the lower-bound shift into the
  objective function of the DMP yields the lower-bound formula.
\end{proof}
We will refer to this lower bound by the name \emph{characteristic
  shift bound}.  There may be tighter bounds by other shifts.  The
advantage of the characteristic shift bound is that is is readily
available in any algorithm that computes the promising sets of
variables.

\section{A branch-and-price algorithm for the SLDP}
\label{sec_column_generation}

\noindent
\deleted[id=JR, comment={does not contribute to the understanding}]{On
  smaller instances, there is evidence that the \emph{score-fix-adjust
    heuristic (SFA)} yields close-to-optimal
  solutions~\cite{p_median}.  However, it is of interest whether the
  performance of SFA is satisfying on larger instances, too.}

Since the set of applicable lot-types and, thus, the set of binary
variables in the stated ILP formulation may become too large for a
static ILP solution, a natural approach is to consider applicable
lot-types and the corresponding assignment options dynamically
in a branch-and-price algorithm.

In this section we show how special structure can be used to obtain a
fast branch-and-price algorithm for practically relevant instances: We
typically have $300\le|\mathcal{B}|\le 1600$ and $3\le|\mathcal{M}|\le
7$ while $\lvert\mathcal{L}\rvert$ can be around $10^9$, see the example
stated in the introduction.

\subsection{The top-level algorithm ASG}
\label{sec:ASG}

The idea of our exact branch-and-price algorithm is based
on the following practical observations on real-world data:
\begin{itemize}
\item The integrality gap of our SLDP model is small.
\item Solutions generated by heuristics perform very well (see
  \cite{p_median} for SFA).
\item An MILP solver can solve instances of the ILP model with few
  lot-types very quickly.
\item There seems to be a ``small'' set of \added[id=SK]{{\lq\lq}good{\rq\rq}} and a
  ``large'' set of \added[id=SK]{{\lq\lq}bad{\rq\rq}} lot-types.
\item No mathematical structure of the set of \added[id=SK]{{\lq\lq}good{\rq\rq}} lot-types
  is known a-priori.
\end{itemize}

We want to exploit the fast heuristic and the tight ILP model.
Instead of using only the LP-solver part of an MILP solver and branch
to integrality manually in our own branch-and-price tree, we decided
to utilize as much of the modern ILP solver technology as possible in
our algorithm by generating and solving a sequence of ILP models
handling distinct subproblems with only few lot-types.  We call this
branch-and-price method \emph{augmented subproblem grinding (ASG)}.
\added[id=JR, comment={motivation for the name ``ASG''}]{The name is
  motivated by the interpretation that in an iteration we
  ``grind-off'' a promising subproblem from the full remaining
  subproblem, solve it to optimality, and proceed with the remaining
  subproblem that does not contain the solved promising subproblem
  anymore.} The implementation of this method needs a much smaller
effort than a typical branch-and-price implementation with branching,
e.g., on the variables of a compact model formulation like the one in
appendix~\ref{sec:comp-model-param}.

With this, the outline of ASG in its simplest version reads as follows.
\begin{enumerate}
\item Run the SFA heuristics to obtain an incumbent feasible SLDP
  solution $(x^*, y^*\added[id=JR]{, p^*})$ that
  implies an upper bound~$z^{\mathit{upper}}$ for the SLDP optimum.
\item Generate a small set of initial columns for the RMP so that
  the RMP is feasible. Solve the RMP. Enter the pricing phase.
\item The pricing phase: While the PP returns new columns with
  negative reduced costs, do the following:
  \begin{enumerate}
  \item Add (some of) these columns to the RMP and resolve.  
  \item From the PP update~$z^{\mathit{lower}}$ using the
    characteristic shift bound.
  \item If $z^{\mathit{lower}} \ge z^{\mathit{upper}}$, return
    $(x^*, y^*\added[id=JR]{, p^*})$ and their
    cost~$z^{\mathit{upper}}$.
  \end{enumerate}
  At the end of this step, we obtain an optimal MP solution which
  implies an updated lower bound $z^{\mathit{lower}}$ for the SLDP
  optimum.  If $z^{\mathit{lower}} \ge z^{\mathit{upper}}$, return
  $(x^*, y^*\added[id=JR]{, p^*})$ and their
  cost~$z^{\mathit{upper}}$.  Else enter the cutting phase.
\item The cutting phase: Add all lot-types
  $\mathcal{L}' \subset \mathcal{L}$ from the RMP to the RSLDP and
  solve the RSLDP. \added[id=JR, comment={explained what this has to
    do with branching and named the branch}]{This is the
    \emph{promising-subproblem branch} of ASG, which is solved
    immediately.}\footnote{In various applications with tight models,
    the procedure is stopped at this point because one hopes that the
    integrality gap is so small that further effort need not be spent
    (see, e.g., \cite{adac}).  ASG can be seen as extending this into
    an exact algorithm finding the optimum ILP solution.}  If the
  RSLDP solution is cheaper than the current incumbent
  $(x^*, y^*\added[id=JR]{, p^*})$, update the
  incumbent $(x^*, y^*\added[id=JR]{, p^*})$ and
  the upper bound~$z^{\mathit{upper}}$. If
  $z^{\mathit{lower}} \ge z^{\mathit{upper}}$, return
  $(x^*, y^*\added[id=JR]{, p^*})$ and their
  cost~$z^{\mathit{upper}}$.  Otherwise, strengthen the set covering
  constraint in the SLDP to
  $\sum_{l \in \mathcal{L} \setminus \mathcal{L}'} y_l +
  \replaced[id=JR]{\slackvariable}{s} \ge 1$
  yielding $\replaced[id=JR]{\slackvariable}{s} \ge 1$
  in the RMP, solve the resulting RMP, and enter the pricing
  phase. \added[id=JR, comment={named the branch}]{This is the
    \emph{remaining-subproblem branch} of ASG.}
\end{enumerate}

The top-level algorithm for ASG is listed in pseudo code in
Algorithm~\ref{alg:top}.  Several subroutines are used that will be
explained in the following sections.

\SetKwData{RMPTotalCols}{\ensuremath{(\mathcal{L}', \lottypesatbranchfunction, \multsatbranchlottypefunction)}}
\SetKwData{RMPUpdatedCols}{\ensuremath{
    (\mathcal{L}' \cup \mathcal{L}^{\mathrm{new}},
    \lottypesatbranchfunction \cup \lottypesatbranchfunction^{\mathrm{new}},
    \multsatbranchlottypefunction \cup \multsatbranchlottypefunction^{\mathrm{new}})}}
\SetKwData{RMPNewCols}{\ensuremath{(\mathcal{L}^{\mathrm{new}},
    \lottypesatbranchfunction^{\mathrm{new}}, \multsatbranchlottypefunction^{\mathrm{new}})}}
\SetKwData{UpperBound}{\ensuremath{z^{\mathit{upper}}}}
\SetKwData{LowerBound}{\ensuremath{z^{\mathit{lower}}}}
\SetKwData{Incumbent}{\ensuremath{(x^*, y^*, p^*)}}
\SetKwData{LiftedIncumbent}{\ensuremath{(\bar{x}^*, \bar{y}^*, \bar{p}^*)}}
\SetKwData{RMPSetCoverSupport}{SetCoverSupport}
\SetKwFunction{GLB}{GenerateLocBestLottypes}
\SetKwData{ScoreTable}{ScoreTable}
\SetKwFunction{SFA}{ScoreFixAdjust$^+$}
\SetKwFunction{RMPinit}{InitRMP}
\SetKwFunction{RMPaddcols}{RMPaddcols}
\SetKwFunction{RMP}{SolveRMP}
\SetKwFunction{RSLDP}{SolveRSLDP}
\SetKwFunction{PP}{SolvePP}
\SetKwFunction{ASG}{ASG}
\begin{algorithm}[htbp]
  \KwIn{A consistent SLDP; $K_1, K_2, K_3, K_4, K_5 \in \mathbb{N}$ (Parameters)}
  \KwOut{An optimal solution for the SLDP}
  $\UpperBound \longleftarrow \infty$\;
  $\LowerBound \longleftarrow -\infty$\;
  $(\mathcal{L}', \ScoreTable) \longleftarrow \GLB{$K_1, K_2$}$\;
  $\mathcal{L}'' \longleftarrow \emptyset$\;
  $\bigl(\LiftedIncumbent, \UpperBound\bigr) \longleftarrow
  \SFA{$\ScoreTable$}$\;
  $\RMPTotalCols \longleftarrow \RMPinit{$K_1, K_2, K_3$}$\;
  \While{true}{
    \Repeat{$\RMPNewCols = (\emptyset, \emptyset, \emptyset)$}{
      \tcp{The pricing phase:}
      $(\alpha, \beta, \gamma, \delta, \mu, \phi, \psi; z^{\mathit{RMP}}) \longleftarrow \RMP{$\RMPTotalCols; \mathcal{L}''$}$\;
      $\bigl(\RMPNewCols, \LowerBound\bigr) \longleftarrow
      \PP{$\alpha, \beta, \gamma, \delta, \mu, \phi, \psi;
        z^{\mathit{RMP}}; \mathcal{L}'; \LowerBound; K_4, K_5$}$\;
      \uIf{$\UpperBound \le \LowerBound$}{
        \Return{$\LiftedIncumbent$}\;
      }
      \Else{
        $\RMPTotalCols \longleftarrow \RMPUpdatedCols$\;
      }
    }
    \tcp{The cutting phase:}
    $\bigl(\Incumbent, \UpperBound\bigr) \longleftarrow \RSLDP{$\mathcal{L}'$}$\;
    \uIf{$\UpperBound \le \LowerBound$}{\Return{$\LiftedIncumbent$}}
    \Else{
      $\mathcal{L}'' \longleftarrow \mathcal{L}'$\;
    }
  }
  \caption{Top Level Algorithm \protect\ASG for an SLDP}\label{alg:top}
\end{algorithm}

\subsection{The subroutine \protect\GLB{$K,K'$}}

We call an element
$(b, l, m^*) \in \mathcal{B} \times \mathcal{L} \times \mathcal{M}$ a
\emph{locally optimal multiplicity assignment}, if
$c_{b, l, m^*} = \min_{m \in \mathcal{M}} c_{b,l,m}$.  (Note that
$m^*$ depends on $b$ and~$l$.)  Moreover, we call an element
$(b, l^*, m^*) \in \mathcal{B} \times \mathcal{L} \times \mathcal{M}$
a \emph{locally optimal lot-type-multiplicity assignment}, if
$c_{b, l^*, m^*} = \min_{l \in \mathcal{L}} c_{b,l,m^*}$.  For a
branch $b \in \mathcal{B}$ and a number
$K \in \mathbb{N}$ we call a list of lot-types
$\bigl(l_1(b), \dots, l_K(b)\bigr)$ a \emph{list of $K$ locally best
  lot-types for branch~$b$}, if for all $j = 1, \dots, K$ there are at
most $j - 1$ many $l \in \mathcal{L}$ with
$c_{b, l, m^*} < c_{b, l_j, m^*}$.

For parameters~$K, K' \in \mathbb{N}$, the subroutine
\ensuremath{\mathrm{GenerateLocBestLottypes}(K,K')}
generates a subset of lot-types in the following way: first, for each
branch $b \in \mathcal{B}$ a list $\bigl(l_1(b), \dots, l_K(b)\bigr)$
of $K$ locally best lot-types is determined.  Then, we score the
lot-types by adding a score of $-\bigl(10^{K-j}\bigr)$ to the $j$th
best lot-type for branch~$b$, $j = 1, \dots, K$.  From the score
ranking we pick the $K'$ elements with the smallest scores.  The pseudo
code can be found in Algorithm~\ref{alg:GLB}.

\SetKwData{pLottype}{PartialLottype}
\SetKwFunction{FLB}{FindLocBestLottypes}
\begin{algorithm}[htbp]
  \KwIn{A consistent SLDP and $K, K' \in \mathbb{N}$}
  \KwOut{A subset of lot-types $\mathcal{L}' \subseteq \mathcal{L}$
    and a sorted table $\ScoreTable$ of all lot-types with non-zero score}
  
  \tcp{store lot-types (keys) with scores (values) in a table,
    initially empty:}
  $\ScoreTable \longleftarrow \bigl(\bigr)$\;
  \For{$b \in \mathcal{B}$}{
    $\bigl(l_1(b), \dots, l_K(b)\bigr) \longleftarrow \FLB{$b, K$}$\;
    \For{$j$ from $1$ to $K$}{
      \If{$\ScoreTable$ does not yet contain $l_j$}{
        insert $l_j$ into $\ScoreTable$ with value $0$\;
      }
      $\ScoreTable(l_j) \longleftarrow \ScoreTable(l_j) - 10^{K-j}$
    }
  }
  $\mathcal{L}' \longleftarrow \{$ the best $K'$ lot-types in \ScoreTable$ \}$\;
  \Return{$(\mathcal{L}', \ScoreTable)$}\;
  \caption{\protect\GLB}\label{alg:GLB}
\end{algorithm}

\SetKwData{pLottype}{\ensuremath{l'}}
\SetKwData{LTList}{LotTypeList}
\SetKwFunction{RecFind}{RecLocBestLottypes}
\begin{algorithm}[htbp]
  \KwIn{A consistent SLDP, $b \in \mathcal{B}$, $K \in \mathbb{N}$}
  \KwOut{A list $\LTList = \bigl( l_1, \dots, l_K \bigr)$ of $K$ locally best lot-types for branch~$b$}
  
  \tcp{start with an empty sorted list:}
  $\LTList \longleftarrow \bigl(\bigr)$\;
  \tcp{start with an empty partial lot-type:}
  $l' \longleftarrow \bigl(\bigr)$\;
  \tcp{applicable range of cardinalities for the first size:}
  $n_{\min} \longleftarrow \min_t - (\abs{S} - 1) \max_c$\;
  $n_{\max} \longleftarrow \max_t - (\abs{S} - 1) \min_c$\;
  $\RecFind{$b, K, l', n_{\min}, n_{\max}; \LTList$}$\;
  \Return{$\LTList$}\;
  \caption{\protect\FLB}\label{alg:FLB}
\end{algorithm}
\SetKwData{CList}{CardList}
\SetKwFunction{BBLowerBound}{\ensuremath{\lambda_b(l'')}}
\begin{algorithm}[htbp]
  \KwIn{A consistent SLDP, $b \in \mathcal{B}$, $K \in \mathbb{N}$, a
    partial lot-type~$l'$, $n_{\min}, n_{\max} \in \mathbb{N}$, a
    list $\LTList = \bigl( l_1, \dots, l_K \bigr)$ of lot-types found so far}
  \KwOut{An updated list $\LTList$ of lot-types}
  
  \uIf{$l'$ is complete}{
    \tcp{lot-type reached - check cost and update lot-type list:}
    \If{$\abs{\LTList} < K$ or $c_{b, l', m^*} < c_{b, l_K, m^*}$}{
      insert $l'$ into $\LTList$\;
    }
  }
  \Else{ 
    \tcp{list extensions of $l'$ sorted by lower bounds:}
    $\CList \longleftarrow \bigl(\bigr)$\;
    \For{$n$ from $n_{\min}$ to $n_{\max}$}{
      $l'' \longleftarrow \bigl(l', n \bigr)$\;
      \If{$\BBLowerBound \le c_{b, l_K, m^*}$}{
        \tcp{lower bound does not exclude $l''$:}
        insert $n$ into $\CList$\;
      }
    }
    sort $\CList$ by increasing $\BBLowerBound$\;
    \For{$n$ in $\CList$}{
      \tcp{process all non-pruned extensions:}
      $l'' \longleftarrow \bigl(l', n \bigr)$\;
      $n'_{\min} \longleftarrow n_{\min} + (\max_c - n)$\;
      $n'_{\max} \longleftarrow n_{\max} - (n - \min_c)$\;
      $\RecFind{$b, K, l'', n'_{\min}, n'_{\max}; \LTList$}$\;
    }
  }
  \caption{\protect\RecFind}
  \label{alg:recFLB}
\end{algorithm}

Since determining the $K$ locally best lot-types for a branch requires
a search in the complete, possibly very large lot-type set
$\mathcal{L}$, we rather synthesize lot-types by determining the
number of pieces for each size separately in a branch-and-bound
algorithm named \ensuremath{\mathrm{FindLocBestLottypes}(b, K)}
that recursively extends partial lot-types in a depth-first-search
manner.  In each branch-and-bound node, the cost for \added[id=SK]{the completion} 
of a partial lot-type is bounded from below by the minimal cost over
all possible multiplicities of the cheapest (not necessarily feasible)
completions.  Each leaf in the branch-and-bound tree yields an
applicable lot-type and, together with its locally optimal
multiplicity assignment, a cost.  We maintain a sorted list of the
$K$ best lot-types. As long as the list contains less than $K$
elements, each new (complete) applicable lot-type is added to it.  If
the list is full, the cost of each new lot-type is compared to the
worst in the list.  If it is cheaper, then it is exchanged with the
current worst solution, and the list is resorted.  Moreover, once the
list is full, the lower bound of each partial lot-type is compared to
the worst in the list.  If the lower bound exceeds the cost of the
worst element in the list, then the node corresponding to the partial
lot-type is pruned.  In the following, we explain the procedure 
more formally.

A \emph{partial lot-type} is a vector
$(l'_{s'})_{s' \in \mathcal{S}'}\in\mathbb{N}^{\mathcal{S}'}$ for
some $\mathcal{S}' \subseteq \mathcal{S}$.  As for complete
lot-types, the number of pieces in this partial lot-type is denoted
by $\abs{l'} := \sum_{s' \in \mathcal{S}'} l'_{s'}$.  A
\emph{completion} of a partial lot-type $l'$ is a lot-type $l$ with
$l_{s'} = l'_{s'}$ for all $s' \in \mathcal{S}'$.  For any partial
lot-type $l'$ we consider the two extremal completions
$\underline{l'}$ with
$(\underline{l'})_{s \in \mathcal{S} \setminus \mathcal{S}'} :=
(\min_c)_{s \in \mathcal{S} \setminus \mathcal{S}'}$ and
$\overline{l'}$ with
$(\overline{l'})_{s \in \mathcal{S} \setminus \mathcal{S}'} :=
(\max_c)_{s \in \mathcal{S} \setminus \mathcal{S}'}$, where we fix
the numbers of pieces for missing sizes to the minimal and maximal
applicable values, respectively.  Consequently, the minimal and
maximal total numbers of pieces for the missing sizes are given by
$\abs{\underline{l'}}$ and $\abs{\overline{l'}}$, respectively.

A partial lot-type~$l'$ is \emph{applicable} if
$\min_c \le l'_{s'} \le \max_c$ for all $s' \in \mathcal{S}'$, and,
moreover, $\abs{\overline{l'}} \ge \min_t$, and
$\abs{\underline{l'}} \le \max_t$, i.e., there exists \replaced[id=JR,
comment={minor rephrasing}]{a}{an applicable} completion to an
applicable lot-type.  For a given branch~$b$, the \emph{partial cost}
of a partial lot-type~$l'$ if delivered with multiplicity~$m$ is given
by
$c_{b,l',m} := \sum_{s' \in \mathcal{S}'} \abs{d_{b, s'} - ml'_{s'}}$.
A lower bound for the cost $c_{b,l,m}$ of any completion $l$ of~$l'$
can be derived as follows: Denote by
\begin{equation}
  c_{b,l^*_s,m} := \min \bigl\{ \abs{d_{b, s} - ml_s} : \min_c \le l_s \le \max_c \bigr\}
\end{equation}
the minimal partial cost over all single-size partial lot-types $l_s$
for a given branch~$b$ and a given multiplicity~$m$.  Let $l^*_s$ be a
corresponding minimizer.  In other words, if branch~$b$ receives $m$
lots of some lot-type, then the lot-type component $l^*_s$ for
size~$s$ incurs the smallest possible cost contribution in size~$s$.
With this locally optimal fixing, we obtain a lower bound for any
completion $l$ of~$l'$ if the multiplicity is~$m$:
\begin{equation}
  c_{b,l,m} \ge \lambda_b(l', m) := c_{b,l',m} + \sum_{s \in \mathcal{S} \setminus
    \mathcal{S}'} c_{b,l^*_s,m}.
\end{equation}
A lower bound for the cost in branch $b$ of any completion~$l$ of~$l'$
with any multiplicity is then
\begin{equation}
  c_{b,l,m} \ge \lambda_b(l') := \min_{m \in \mathcal{M}} \lambda_b(l', m).
\end{equation}
Unfortunately, the function $\lambda_b(l', m)$ is not convex in~$m$.
Still, the optimization over $m \in \mathcal{M}$ can be done by
complete enumeration, since typically $\abs{M} \le 7$.

The depth-first-search branch-and-bound follows in each recursion
level the extensions of a partial lot-type by a single new size in the
order of increasing lower bounds for the cost of completion.  A
possible pseudo code is shown in Algorithms \ref{alg:FLB}
and~\ref{alg:recFLB}.  \deleted[id=JR, comment={the parameters are now
  explained elsewhere}]{For our tests, we have used $K = 3$.}

\subsection{The subroutine \protect\SFA{$\protect\ScoreTable$}}
\label{sec:subroutine-SFA}

We briefly recall the SFA heuristics from~\cite{p_median}\added[id=JR,
comment={indicated that we enhanced SFA}]{, adapted to our context}.
The name stems from the three main steps \emph{score -- fix --
  adjust}.  In the \emph{score} step, all lot-types are scored by
how-often they are the locally best, second-best, \replaced[id=JR,
comment={parametrized how many we use}]{\ldots, $K$th-best}{and
  third-best} lot-types for a branch. \added[id=JR, comment={moved the
  mention of the score table here}]{In our set-up, the scoring
  information is taken from the output $\protect\ScoreTable$ of
  $\protect\GLB$ in Algorithm~\ref{alg:GLB}}.  The \replaced[id=JR,
comment={concretized where the scoring comes from}]{order
  of~$\protect\ScoreTable$}{scoring} implies a lexicographic order on
all $k$-subsets of lot-types \added[id=JR,
comment={dto.}]{in~$\protect\ScoreTable$}.  In the \emph{fix} step,
all $k$-subsets of lot-types \replaced[id=JR, comment={explicit
  reference to the data structure}]{in~$\protect\ScoreTable$}{with a
  non-zero score} are traversed in this lexicographic order.  To each
branch, the best fitting lot-type from this $k$-subset is assigned in
the locally optimal multiplicity.  If the lower and upper bounds on
the total cardinality of the supply over all branches are satisfied,
this yields a feasible solution. If not, the \emph{adjust} step is
necessary: For each possible exchange of a multiplicity assignment
that makes the total-cardinality violation smaller (no overshooting
allowed), the relative cost-increase per cardinality change is
computed.  Then, the assignment from the \emph{fix} step is adjusted
by applying these exchange sequentially in the order of increasing
relative cost increases until either the total cardinality is feasible
or no feasible exchange is possible.  In the first case, a feasible
assignment is found and the cost is compared to the current best
incumbent.  In the second case, the $k$-subset under consideration is
dismissed.  This procedure is run until either all $k$-subsets of
\replaced[id=JR, comment={explicit reference to the data
  structure}]{lot-types in~$\protect\ScoreTable$}{non-zero-scored
  lot-types} have been processed or a time limit has been reached.
The best found solution is returned.  In this paper, we extend the
\emph{adjust} step by \deleted[id=JR, comment={simplified
  sentence}]{also allowing for} exchanges of lot-type-multiplicity
assignments.  This has the advantage that we can find feasible
solutions in more instances.  \deleted[id=JR,comment={the code uses
  only the list obtained by \protect\GLB}]{To obtain the scoring
  information in our context, we reuse \protect\ScoreTable{} of
  Algorithm~\ref{alg:GLB}.}  An overview in pseudo code can be found
in Algorithm~\ref{alg:SFA}.

\SetKwData{AdjTable}{AdjTable}
\SetKwData{Cost}{$z$}
\SetKwFunction{ALH}{AverageLottypeHeuristics}
\begin{algorithm}[htbp]
  \KwIn{A consistent SLDP, a sorted table $\ScoreTable$ of lot-types
    with non-zero score}
  \KwOut{A feasible solution and its objective value
    $\bigl(\Incumbent, \UpperBound\bigr)$ or $(-, \infty)$}

  $(\Incumbent, \UpperBound) \longleftarrow (-, \infty)$\;
  \uIf{$k > 1$}{
    $(\Incumbent, \UpperBound) \longleftarrow \ALH{}$\;
  }
  \Repeat{time limit reached}{
    \For{$L$ in $k$-subsets of lot-types in $\ScoreTable$}{
      \For{$b \in \mathcal{B}$}{
        $\bigl(b, l_L(b), m_L(b)\bigr) \longleftarrow (b, l^*, m^*)$
        \tcp*{locally optimal assignment}
      }
      \While{$\bigl(b, l_L(b), m_L(b)\bigr)$ violates cardinality constraint}{
        $\AdjTable \longleftarrow \bigl\{ \bigl(b, l, m\bigr)
        : \bigl(b, l, m\bigr) \text{ decreases violation of $\bigl(b, l_L(n), m_L(b)\bigr)$}
        \bigr\}$\;
        \uIf{$\AdjTable = \emptyset$}{
          $\Cost{L} \longleftarrow \infty$, 
          break\;
        }
        \Else{
          $(b^*, l^*, m^*) \longleftarrow$ the marginally cheapest
          element in $\AdjTable$\;
          $\bigl(b^*, l_L(b^*), m_L(b^*)\bigr) \longleftarrow (b^*, l^*, m^*)$\;
        }
      }
      $\Cost{L} \longleftarrow$ cost of assignment $\bigl(b, l_L(b),
      m_L(b)\bigr)_{b \in \mathcal{B}}$\;
      \If{$\Cost{L} < \UpperBound$}{
        $\UpperBound \longleftarrow \Cost{L}$,
        $\LiftedIncumbent \longleftarrow$ variable encoding of
        $\bigl(l_L(b), m_L(b)\bigr)_{b \in \mathcal{B}}$;
      }
    }
  }
  \Return{$\bigl(\LiftedIncumbent, \UpperBound\bigr)$}\;
  \caption{The subroutine \protect\SFA.}
  \label{alg:SFA}
\end{algorithm}

\SetKwData{AvgDemandType}{\ensuremath{\bar{d}_t}}
\SetKwData{AvgDemandComp}{\ensuremath{\bar{d}_c}}
\SetKwData{AggCard}{\ensuremath{\bar{n}}}
\SetKwData{AggDemand}{\ensuremath{D}}
\begin{algorithm}
  \KwIn{A consistent SLDP with $k > 1$}
  \KwOut{A feasible solution and its objective value
    $\bigl(\Incumbent, \UpperBound\bigr)$}
  $\AvgDemandType \longleftarrow
  \max\bigl\{
  \min_t,
  \min\bigl\{
  \max_t,
  \frac{1}{\abs{\mathcal{B}}}\sum_{b \in \mathcal{B}} \sum_{s
    \in \mathcal{S}} d_{b, s}
  \bigr\}
  \bigr\}$\;
  $\AvgDemandComp \longleftarrow
  \max\bigl\{
  \min_c,
  \min\bigl\{
  \max_c,
  \frac{1}{\abs{\mathcal{S}}}\AvgDemandType
  \bigr\}
  \bigr\}$\;
  $\check{l} \longleftarrow \bigl( \bigl\lfloor \AvgDemandComp \bigr\rfloor \bigr)_{s \in \mathcal{S}}$,
  $\hat{l} \longleftarrow \bigl(\bigl\lceil \AvgDemandComp \bigr\rceil \bigr)_{s \in \mathcal{S}}$\;
  \While{$\abs{\check{l}} < \min_t$}{
    $\check{l}_s \longleftarrow \check{l}_s + 1$ for some $s \in \mathcal{S}$ with $\check{l}_s < \max_c$\;
  }
  \While{$\abs{\hat{l}} > \max_t$}{
    $\hat{l}_s \longleftarrow \hat{l}_s - 1$ for some $s \in \mathcal{S}$ with $\hat{l}_s > \max_c$\;
  }
  $\AggCard \longleftarrow 0$, $\AggDemand \longleftarrow 0$, $m(b) \longleftarrow 1$\;
  \For{$b \in \mathcal{B}$}{
    \uIf{$\AggCard \abs{\check{l}} + (\abs{\mathcal{B}} - b - 1)\abs{\hat{l}} < \underline{I}$}{
      $l(b) \longleftarrow \hat{l}$\;
    }
    \uElseIf{$\AggCard \abs{\check{l}} + (\abs{\mathcal{B}} - b - 1)\abs{\check{l}} > \overline{I}$}{
      $l(b) \longleftarrow \check{l}$\;
    }
    \uElseIf{$\AggCard + \frac{\abs{\check{l}} + \abs{\hat{l}}}{2} > \AggDemand + d_b$}{
      $l(b) \longleftarrow \check{l}$\;
    }
    \Else{
      $l(b) \longleftarrow \hat{l}$\;
    }
    $\AggCard \longleftarrow \AggCard + \abs{l(b)}$,
    $\AggDemand \longleftarrow \AggDemand + d_b$\;
  }
  $\LiftedIncumbent \longleftarrow $ variable encoding of $\bigl(l(b),
  m(b) \bigr)_{b \in \mathcal{B}}$,
  $\UpperBound \longleftarrow $ cost of $\Incumbent$\;
  \Return{$\bigl(\LiftedIncumbent, \UpperBound\bigr)$}\;
  \caption{The subroutine \protect\ALH.}
  \label{alg:ALH}
\end{algorithm}

In order to find a feasible solution for all \replaced[id=JR,
comment={used the defined notion}]{consistent}{``reasonable''}
instances we \replaced[id=JR, comment={minor
  rephrasing}]{employ}{suggest} a simple fall-back heuristic called
\emph{average lot-type heuristics (ALH)} that guarantees feasibility
in all consistent instances with $k > 1$.  This is no serious
restriction because for $k = 1$ SFA was proven to be fast and optimal
in~\cite{p_median}.  We consider ALH as part of SFA that is run at the
very beginning of SFA.

ALH works as follows: Compute the average demand
$\bar{d}_t := \frac{1}{\abs{\mathcal{B}}} \sum_{b \in \mathcal{B}}
d_{b}$ per branch over all branches with
$d_b := \sum_{s \in \mathcal{S}} d_{b, s}$.  If this number is
smaller than~$\min_t$, set it to $\min_t$.  If it is larger
than~$\max_t$, set it to~$\max_t$.  Both cannot happen since then
$\min_t > \max_t$, and the instance is infeasible.  Next, compute
the average demand per size
$\bar{d}_c := \frac{1}{\abs{\mathcal{S}}} \bar{d}_t$.  If this
number is smaller than~$\min_c$ set it to~$\min_c$.  If it is larger
than~$\max_c$ set it to~$\max_c$.  Again, both cannot happen for a
feasible instance.

For each $s \in \mathcal{S}$ let
\begin{align}
  \check{l}_s &:= \max \bigl\{ \lfloor \bar{d}_c \rfloor, \min_c \bigr\}\text{ and }\\
  \hat{l}_s   &:= \min \bigl\{ \lceil \bar{d}_c \rceil, \max_c \bigr\}.
\end{align}
While $\abs{\check{l}} < \min_t$, increase an arbitrary component
$\check{l}_s < \max_c$ by one. (If no such component exists, the
instance is infeasible.)  Analogously, we adjust~$\hat{l}$.

This defines two applicable lot-types $\check{l}$ and~$\hat{l}$. If
$\abs{\mathcal{B}}\cdot \abs{\check{l}} > \overline{I}$, then, by
construction,
$\abs{\mathcal{B}}\cdot \abs{\mathcal{S}} \bar{d}_c > \overline{I}$, or
$\abs{\mathcal{B}} \cdot \abs{\mathcal{S}} \min_c > \overline{I}$, or
$\abs{\mathcal{B}} \bar{d}_t > \overline{I}$, or
$\abs{\mathcal{B}} \min_t > \overline{I}$.  All cases prove that the
instance is not consistent.  Similarly,
$\abs{\mathcal{B}} \cdot \abs{\hat{l}} < \underline{I}$ is impossible for
consistent instances.

Now assign $\check{l}$ and $\hat{l}$ sequentially to branches.  This
is possible for all proper instances ($k > 1$).  Assume, after the
assignment to $b-1$ branches we have assigned $n_{b-1}$ pieces and
the aggregated demand over the first $b-1$ branches is $D_{b-1}$.
If
$n_{b-1} + \abs{\check{l}} + (\abs{\mathcal{B}} - b -
1)\abs{\hat{l}} < \underline{I}$, we have to assign $\hat{l}$ to
branch~$b$.  If
$n_{b-1} + \abs{\hat{l}} + (\abs{\mathcal{B}} - b -
1)\abs{\check{l}} > \overline{I}$, we have to assign $\check{l}$ to
branch~$b$.  Otherwise, assign to branch~$b$ the lot-type
$\check{l}$ if and only if
$n_{b-1} + \frac{\abs{\check{l}} + \abs{\hat{l}}}{2} > D_{b-1} +
d_b$, i.e., try to approximate the demand aggregated up to
branch~$b$ as closely as possible by the number of assigned pieces
aggregated up to branch~$b$.

Whenever
$\overline{I} - \underline{I} \ge \abs{\hat{l}} - \abs{\check{l}}$
this sequential algorithm yields a feasible solution.  For example,
if $\overline{I} - \underline{I} \ge \max_t - \min_t$ this is
satisfied because, by construction,
$\max_t - \min_t \ge \abs{\hat{l}} - \abs{\check{l}}$.  The pseudo
code for ALH can be found in Algorithm~\ref{alg:ALH}.  In all of our
test instances, SFA found a feasible solution within the time limit
so that ALH was not really needed.

\subsection{The subroutine \protect\RMPinit{$K, K', K''$}}
\label{sec:subrouting-RMPinit}

In this subroutine, we generate an initial set of columns for
the RMP.  Our choice is to add
\begin{enumerate}
\item the $x$-columns corresponding to the SFA solution and the
  implied $y$-columns for all lot-types used in the SFA solution;
\item the $y$-columns corresponding to the $K$
  highest-scored lot-types, and for each of these $K$ lot-types the
  $x$-columns for all branches with the locally optimal
  multiplicities;
\item for each branch $b$, the $x$-columns corresponding to all
  lot-types $l$ that are among the locally best $K'$
  lot-types for~$b$ and the corresponding locally optimal
  lot-type-multiplicities, together with the implied $y$-columns.
\item \added[id=JR, comment={added this item based on the actual
    software}]{If for the same lot-type and branch, there are multiple
    multiplicities in the initial RMP, we add the intermediate
    multiplicities as well; the resulting interval of multiplicities
    is then enlarged by $K''$ multiplicities to the left and the right
    up to the boundaries of~$\mathcal{M}$.}
\end{enumerate}
The reason for the first set of columns is that we want the RSLDP to
find a solution at least as good as the heuristic.  The reason for the
second set of columns is to consider some lot-types for \emph{all}
branches, namely those that fit well a large number of branches; this
should ease the satisfaction of the cardinality constraint on the
number of used lot-types.  The reason for the third set of columns is
that the individually best lot-types for branches are promising for a
``spot-repair'' of almost complete assignments.  \added[id=JR]{The
  reason for adding the intermediate multiplicities is that this makes
  some (though not all) of the subproblems convex.} The computation
times in our test instances (see
section~\ref{sec_results_column_generation}) did not react too
sensitive on the deliberate choices we made here.  We tested smaller
and larger sets of columns with no consistent improvement in our
tests.

\subsection{The subroutine \protect\RMP{$\protect\RMPTotalCols; \mathcal{L}''$}}
\label{sec:subrouting-RMPsolve}

In this subroutine, we solve the current RMP containing $y$-variables
corresponding \added[id=SK]{to} a subset of lot-types~$\mathcal{L}'$
and $x$-variables for \added[id=SK]{assignment} options corresponding
to $\RMPTotalCols$.  The subset of lot-types $\mathcal{L}''$ was
considered in the previous RSLDP, i.e., it is missing in the support
of the set-covering constraint of the current RMP.

The only important choice to make in this procedure is which LP
algorithm we call in a black-box LP solver.  We chose to employ, as
usual,
\begin{enumerate}
\item the primal simplex algorithm in the pricing phase (because
  adding columns maintains the primal feasibility of the previous
  optimal basis);
\item the dual simplex algorithm in the cutting phase (because the
  strengthening of the set-covering constraint maintains the
  dual feasibility of the previous optimal basis).
\end{enumerate}
The outcome of this subroutine is a set of dual variables together
with an objective value
$(\alpha, \beta, \gamma, \delta, \mu, \phi, \psi; z^{\mathit{RMP}})$
that is optimal for the current DRMP.  Note, that the objective
function value $z^{\mathit{RMP}}$ of the RMP and the DRMP is, in
general, neither an upper nor a lower bound for the optimal SLDP
value.

\subsection{The subroutine \protect\PP{$\alpha, \beta, \gamma,
    \delta, \mu, \phi, \psi; z^{\mathit{RMP}}; \mathcal{L}';
    \protect\LowerBound; K, K'$}}
\label{sec:subrouting-PP}

In this subroutine, the pricing problem (PP) is solved based on the
optimal dual variables and the objective value from the previous
solving of the RMP.  We follow the three types of sets of promising
variables from section~\ref{sec:theory}.

\SetKwFunction{CharShiftBound}{$z^{\textit{CSB}}$}
\SetKwFunction{FPL}{FindPromLottypes}
\begin{algorithm}[htbp]
  \KwIn{Duals/objective $\alpha, \beta, \gamma, \delta, \mu, \phi, \psi;
    z^{\mathit{RMP}}$ optimal for the DRMP with lot-types
    $\mathcal{L}'$, current lower bound $\LowerBound$;
    $K, K' \in \mathbb{N}$} 
  \KwOut{New variables and a new lower bound $\bigl(\RMPNewCols,
    \LowerBound\bigr)$} 

  $\RMPNewCols \longleftarrow \emptyset$\;
  \For{$b \in \mathcal{B}$}{
    \For{$l \in \lottypesatbranch$}{
      \If{$\min_{m \in \mathcal{M}}\bar{c}_{b, l, m} < 0$}{
        \tcp{let $m^*$ be a minimizer:}
        $\RMPNewCols \longleftarrow (b, l, m^*)$\;
      }
    }
  }
  \If{$\RMPNewCols \neq \emptyset$}{
    \Return{$\bigl(\RMPNewCols, \LowerBound\bigr)$}\;
  }
  \For{$l \in \mathcal{L}' \setminus \bigcap_{b \in \mathcal{B}}
    \lottypesatbranch$}{
    \If{$\check{c}_{l} 
      < \sum_{\substack{b \in \mathcal{B}:\\l \in \lottypesatbranch}}
      \beta_{b,l} - \gamma - \delta_l + \mu$}{
      \tcp{let $m^*$ be a minimizer for $b$ in $\min_{m \in
          \mathcal{M}}\bar{c}_{b, l, m}$:}
      \If{$\abs{\mathcal{L}^{new}} < K$ or $\check{c}_{l} < \check{c}_{l_K}$}{
        $\RMPNewCols \longleftarrow \RMPNewCols \cup
        \Bigl\{\bigl(b, l, m^*\bigr) : b \in \mathcal{B},
        l \in \mathcal{L}' \setminus \lottypesatbranch\Bigr\}$\;
      }
    }
  }
  \If{$\RMPNewCols \neq \emptyset$}{
    \Return{$\bigl(\RMPNewCols, \LowerBound\bigr)$}\;
  }
  $\RMPNewCols \longleftarrow \RMPNewCols \cup \FPL{$\alpha, \beta,
    \gamma, \delta, \mu, \phi, \psi; z^{\textit{RMP}}; \mathcal{L}'; K'$}$\;
  \If{$\CharShiftBound > \LowerBound$}{
    $\LowerBound \longleftarrow \CharShiftBound$\;
  }
  \Return{$\bigl(\RMPNewCols, \LowerBound\bigr)$}\;
  \caption{\protect\PP}\label{alg:PP}
\end{algorithm}

\SetKwFunction{RecFindProm}{RecPromLottypes}
\begin{algorithm}[htbp]
  \KwIn{Duals/objective $\alpha, \beta, \gamma, \delta, \mu, \phi, \psi;
    z^{\mathit{RMP}}$ optimal for the DRMP with lot-types
    $\mathcal{L}'$, $K \in \mathbb{N}$}
  \KwOut{New columns $\RMPNewCols$ for the $K$ most promising new lot-types}
  \tcp{start with an empty sorted list:}
  $\RMPNewCols \longleftarrow \bigl(\bigr)$\;
  \tcp{start with an empty partial lot-type:}
  $l' \longleftarrow \bigl(\bigr)$\;
  \tcp{applicable range of cardinalities for the first size:}
  $n_{\min} \longleftarrow \min_t - (\abs{S} - 1) \max_c$\;
  $n_{\max} \longleftarrow \max_t - (\abs{S} - 1) \min_c$\;
  $\RecFindProm{$\alpha, \beta, \gamma, \delta, \mu, \phi, \psi;
    z^{\mathit{RMP}}; \mathcal{L}'; K; l', n_{\min}, n_{\max}; \RMPNewCols$}$\;
  \Return{$\RMPNewCols$}\;
  \caption{\protect\FPL}\label{alg:FPL}
\end{algorithm}

\SetKwFunction{BBRedLowerBound}{$\bar{\lambda}(l'')$}
\begin{algorithm}[htbp]
  \KwIn{$\alpha, \beta, \gamma, \delta, \mu, \phi, \psi;
    z^{\mathit{RMP}}$ optimal for the DMP with lot-types
    $\mathcal{L}'$, $K \in \mathbb{N}$, a 
    partial lot-type~$l'$, $n_{\min}, n_{\max} \in \mathbb{N}$, a
    set $\RMPNewCols$ of columns corresponding to lot-types
    found so far with $\mathcal{L}' = (l_1, \dots,
    l_K)$ sorted by sum of negative reduced costs}
  \KwOut{An updated set $\RMPNewCols$ of columns}
  
  \uIf{$l'$ is complete}{
    \tcp{lot-type reached - check cost and update lot-type list:}
    \If{$l' \in \mathcal{L} \setminus \mathcal{L}'$}{
      \If{$\abs{\mathcal{L}^{\mathit{new}}} < K$ or $\check{c}_{l'} <
        \check{c}_{l_K}$}{
        $\RMPNewCols \longleftarrow \RMPNewCols \cup
        \bigl\{
        (b, l', m^*) : b \in \mathcal{B}: \bar{c}_{b, l, m^*} < 0
        \bigr\}$\;
      }
    }
  }
  \Else{ 
    \tcp{list extensions of $l'$ sorted by lower bounds:}
    $\CList \longleftarrow \bigl(\bigr)$\;
    \For{$n$ from $n_{\min}$ to $n_{\max}$}{
      $l'' \longleftarrow \bigl(l', n \bigr)$\;
      \If{$\BBRedLowerBound < -\gamma + \mu$ and $\BBRedLowerBound
        < \check{c}_{l_K}$}{
        \tcp{lower bound does not exclude $l''$:}
        insert $n$ into $\CList$\;
      }
    }
    sort $\CList$ by increasing $\BBRedLowerBound$\;
    \For{$n$ in $\CList$}{
      \tcp{process all non-pruned extensions:}
      $l'' \longleftarrow \bigl(l', n \bigr)$\;
      $n'_{\min} \longleftarrow n_{\min} + (\max_c - n)$\;
      $n'_{\max} \longleftarrow n_{\max} - (n - \min_c)$\;
      $\RecFindProm{$\alpha, \beta, \gamma, \delta, \mu, \phi, \psi;
        z^{\mathit{RMP}}; \mathcal{L}'; K; l', n_{\min}, n_{\max}; \RMPNewCols$}$\;
    }
  }
  \caption{\protect\RecFindProm}
  \label{alg:recPP}
\end{algorithm}

First, we check whether there are promising $x_{b, l, m}$ with
$\bar{c}_{b, l, m} < 0$ for which $l \in \lottypesatbranch$ and
$m \in \mathcal{M} \setminus \multsatbranchlottype$.  We add all
such variables to the RMP and store for each branch the minimal
reduced cost observed.

Next, we check for all
$l \in \mathcal{L}' \setminus \bigcap_{b \in \mathcal{B}}
\lottypesatbranch$ whether
\begin{equation}
  \check{c}_l = -\sum_{\substack{b \in \mathcal{B}:\\l \in \mathcal{L}' \setminus \lottypesatbranch}}
  \bigl( \min_{m \in \mathcal{M}} \bar{c}_{b,l,m} \bigr)^- < \sum_{\substack{b
      \in \mathcal{B}:\\l \in \lottypesatbranch}}\beta_{b,l} - \gamma - \delta_l + \mu.
\end{equation}
For any $l$ for which this happens we add all $x_{b, l, m}$ with
$\min_{m \in \mathcal{M}} \bar{c}_{b,l,m} < 0$ to the RMP.
\added[id=JR, comment={made this compatible with the actual
  software}]{We support a parameter $K \in \mathbb{N}$ to restrict the
  output to the $K$ column sets that have the smallest~$\check{c}_l$.}

For the final case, if there are many lot-types, we cannot simply
check for all $l \in \mathcal{L} \setminus \mathcal{L}'$ whether
\begin{equation}
  \label{eq:newlot-type}
  \check{c}_l   = -\sum_{b \in \mathcal{B}} \bigl( \min_{m \in \mathcal{M}} \bar{c}_{b,l,m}
  \bigr)^- + \bigl( \min_{\substack{b \in \mathcal{B}\\m \in \mathcal{M}}}
  \bar{c}_{b,l,m} \bigr)^+ < -\gamma + \mu.
\end{equation}
Therefore, for some $K' \in \mathbb{N}$ we solve the
optimization problem to find the (at most) $K'$ lot-types
$l \in \mathcal{L} \setminus \mathcal{L}'$ with minimal $\check{c}_l$
among those satisfying \eqref{eq:newlot-type}.  Then, we add all
corresponding $y_l$ and all $x_{b, l, m}$ with
$\min_{m \in \mathcal{M}} \bar{c}_{b,l,m} < 0$ to the RMP.

To solve the optimization problem, we devise a branch-and-bound
algorithm extending partial lot-types that is very similar to the
generation of the $K$ locally best lot-types in
Algorithm~\ref{alg:GLB}.  The only difference is that instead of
optimizing the cost for a single branch, we optimize the sum of
negative reduced costs over all branches.  This requires a little care
for the lower bound.

For a given branch~$b$ and a partial lot-type $l'$ with sizes in
$\mathcal{S}'$, we use the lower bound $\lambda_b(l', m)$ from
subsection~\ref{alg:GLB} for the primal cost of any completion of
$l'$ to a complete lot-type when delivered to branch $b$ in
multiplicity~$m$.  In order to turn this into a bound for the
reduced cost of any completion, we subtract the duals from the
canonical lifting.  This yields:
\begin{align}
  \bar{c}_{b, l', m}
  &\ge
    \bar{\lambda}_b(l', m)\notag\\
  &:= \lambda_b(l', m) - \alpha_b
    - m \max_{l_{\mathcal{S}'} = l'} \abs{l}(\phi - \psi)^+
    + m \min_{l_{\mathcal{S}'} = l'} \abs{l}(\phi - \psi)^-.
\end{align}
Together with
\begin{align}
  \max_{l_{\mathcal{S}'} = l'} \abs{l}
  &=
    \abs{l'} +
    \min \Bigl\{
    \bigl( \abs{\mathcal{S}} - \abs{\mathcal{S}'} \bigr) \max_c,
    \max_t - \abs{l'}
    \Bigr\},\\
  \min_{l_{\mathcal{S}'} = l'} \abs{l}
  &=
    \abs{l'} +
    \max \Bigl\{
    \bigl( \abs{\mathcal{S}} - \abs{\mathcal{S}'} \bigr) \min_c,
    \min_t - \abs{l'}
    \Bigr\},
\end{align}
we receive a lower bound for the reduced cost for a given branch and
a given multiplicity for any completion $l$ of~$l'$.  Now we
minimize over all multiplicities for each branch separately and take the sum. 
Thus, for each completion $l$ of~$l'$ we have, using the
short-hand $m^* := m^*(b, l)$ for minimizing multiplicities:
\begin{align}
  \check{c}_l
  &=
    \sum_{b \in \mathcal{B}}
    -\bigl( \bar{c}_{b, l, m^*} \bigr)^-
    +
    \bigl( \min_{b \in \mathcal{B}} \bar{c}_{b, l, m^*} \bigr)^+\\
  &\ge
    \sum_{b \in \mathcal{B}} -\bigl( \bar{\lambda}_b\bigl(l',  m^*\bigl) \bigr)^-
    +
    \min_{b \in \mathcal{B}}\bigl( \bar{\lambda}_b\bigl(l', m^*\bigl) \bigr)^+\\
  &:=
    \bar{\lambda}(l').
\end{align}
Again, the function $\bar{\lambda}_b(l', m)$ is not convex in~$m$,
and we implement the optimization over $m \in \mathcal{M}$ by
complete enumeration (recall that $\abs{M} \le 7$ typically).

From all the newly generated columns, we compute the characteristic
shift bound~$z^{\textit{CSB}}$ using Theorem~\ref{thm:bound} and
update the lower bound $\LowerBound$ if
$z^{\textit{CSB}} > \LowerBound$, i.e., the new bound is tighter.
Algorithms \ref{alg:PP} through~\ref{alg:recPP} list a possible pseudo
code for this.  \deleted[id=JR, comment={parameter settings are
  discussed elsewhere}]{For our tests, we have used $K = 3$.}

In our test, we used \emph{incomplete pricing}, i.e., if we have found
promising sets of variables with $l \in \lottypesatbranch$, then we
stop; if not, we look for promising sets of variables with
$l \in \mathcal{L}' \setminus \bigcup_{b \in
  \mathcal{B}}\lottypesatbranch$; if we find promising sets of
variables there, then again we stop.  If then we still have not found
a promising set of variables, then we enter the more time-consuming
search for new lot-types $l \in \mathcal{L} \setminus \mathcal{L}$.
Note, that the characteristic shift bound can only be derived from
complete pricing steps.

\subsection{The subroutine \protect\RSLDP{$\mathcal{L}'$}}
\label{alg:rsldp}

This subroutine builds an RSLDP with all variables corresponding to
all branches $b \in \mathcal{B}$, lot-types $l \in \mathcal{L}'$, and
multiplicities $m \in \mathcal{M}$.  This RSLDP is then solved by the
black-box ILP solver.  There is one specialty for our implementation:
We used the original model from Section~\ref{sec:modelling} without
the augmenting constraints.  This means, the RSLDP will reproduce
earlier solutions, which causes no harm.  The reason for this is simply
that our particular solver (\texttt{cplex}) could prove the
optimality of an old solution in the smaller model more quickly than the
infeasibility of the larger model with a cut-off at the current upper
bound~$\UpperBound$.  Since this can be dependent on the solver
software and even the version of it, we cannot make a general
statement about which option is to be preferred. 
\deleted[id=JR, comment={removed the
  remark that noone understood}]{It is just a guess, but it seems that
  current solver technology can easier prove optimality than
  infeasibility.}

\section{Computational results}
\label{sec_results_column_generation}

\noindent
In this section we compare the algorithm ASG, implemented in C++, with
the static solution of the ILP model from
Section~\ref{sec:modelling}\added[id=JR, comment={introduced a name to
  simplify further reference to this algorithm}]{, denoted by
  \STATICCPLEX}.

The computational environment was as follows.  The computer was an
iMac (Retina 5K, 27inches, 2019) 3 GHz Intel Core i5 with 32GB 2667
MHz DDR4 RAM running \texttt{MacOSX 10.15.5} based on \texttt{Darwin
  Kernel Version 19.5.0}.  We used \texttt{gcc/g++} from \texttt{Apple
  clang version 11.0.3 (clang-1103.0.32.62)}. As an LP- and ILP-solver
we used \texttt{IBM ILOG CPLEX 12.9.0.0}, both for \STATICCPLEX{} and
for the RMP and RSLDP during the run of ASG.  Profiling was performed
using the \texttt{clock()} utility from \texttt{time.h}.  Timing was
consistent to an accuracy of $\sim \pm 5\%$.  We performed all test
for zero with $\epsilon = 10^{-9}$.  \deleted[id=JR,comment={High
  solver accuracy to large can lead to tailing-off insider the RSLDP;
  high accuracy is irrelevant anyway.}]{The accuracy of the solver was
  set to the same value.  Because of scaling effects, we can expect
  the results to be accurate up to a relative error of $10^{-8}$.}  We
used the defaults for all solver parameters.  \added[id=JR,
comment={corrected the statement about the accuracy}]{Thus, the
  solutions can not be expected to be more accurate than the solver's
  largest default accuracy ``\texttt{mipgap}'' of $10^{-4}$, which is
  still far more accurate than the demand data.}

For ASG, we used a time limit of
$\max\{1.0, \log(\abs{\mathcal{L}}) / 10.0\}$ for SFA.  \added[id=JR,
comment={discussed the investigated parameter settings}]{Moreover, in
  the ``default'' parameter setting of ASG we use $K_1 = 3$ (for the
  maximal number of locally best lot-types determined for each
  branch), $K_2 = 3$ (for the maximal number of used lot-types from
  the score table), $K_3 = 0$ (for the maximal number of additional
  multiplicities), $K_4 = 10$ (for the maximal number of promising
  sets of columns for new branches assigned to an already generated
  lot-type), and $K_5 = 3$ (for the number of promising sets of
  columns with not yet generated lot-types).  In a comparison, we
  checked the influence of parameter settings representing ``fewer
  columns'' ($K_1 = 1$, $K_2 = 3$, $K_3 = 0$, $K_4 = 1$, and
  $K_5 = 1$) and ``more columns'' ($K_1 = 10$, $K_2 = 5$, $K_3 = 2$,
  $K_4 = \infty$, and $K_5 = 10$).}  Furthermore, we used incomplete
pricing to avoid the time-consuming branch-and-bound search for
promising new lot-types whenever \replaced[id=JR, comment={more
  specific phrasing}]{other promising columns could be
  generated}{possible}.

Our tests encompass two substantially different sets of instances of
the \replaced[id=JR, comment={the experiments now contain solutions to
  the SLDP as well}]{(S)LDP}{LDP}.  \deleted[id=JR,
comment={dto.}]{Since the SLDP can be reduced to an LDP (see
  Section~\ref{sec_formal_problem_statement}), we used only LDP
  instances for our test to reduce the influence of too many
  parameters. The first set of LDPs contains 860 real-world instances
  from November 2011 from the daily production of our industry partner
  from different apparel groups.  The second set of LDPs contains $5$
  groups of $9$ random instances with identical parameters each.}
\added[id=JR,comment={Since one reviewer did not understand the
  difference between the nominal LDP and the equivalent LDP of an
  SLDP, we show here the difference.}]{First, we used a set of $860$
  real-world production instances with various instance parameters.
  All instances have three scenarios for the demands.  One of these
  scenarios represents the nominal scenario, as defined in
  Section~\ref{sec:stoch-lot-type}.  For all real-world instances, we
  solved the SLDP (in terms of the equivalent LDP) and the nominal
  LDP, both by ASG and \STATICCPLEX.  To illustrate the gap between
  the SLDP and the nominal LDP we report on the difference in the
  objective function values and the value of the stochastic solution
  (VSS)~\cite[Section~4.2]{1223.90001}.  Moreover, we compared the
  run-times and the numbers of used columns for the two competing
  algorithms.  Second, we compare the algorithms ASG and
  \STATICCPLEX{} on a set of $5$ groups of $9$ random LDP-instances
  with identical parameters each.  The motivation for this second set
  of instances is to safe-guard against the possibility that the
  results on real-world data are an artifact of some hidden structure
  in our special instances.  Since the real-world instances exhibit
  only minor differences in cpu times between solving the nominal LDPs
  and the SLDPs, we restricted the tests for the random instances to
  the nominal LDP.}

In the $860$ real-world instances, the numbers of applicable lot-types
range from $9$ \replaced[id=JR, comment={better
  line-break}]{through}{to} $1{,}198{,}774{,}720$, the numbers of
branches from $1370$ to $1686${,} and the numbers of sizes from $2$
to~$7$.  In the static model, the numbers of variables range from
$8381$ to $9{,}867{,}114{,}720{,}320$, and the numbers of constraints
from $3355$ to $1{,}973{,}183{,}190{,}769$.  About half of the
instances have at least $125{,}038{,}518$ variables and
$25{,}006{,}257$ constraints.  \replaced[id=JR, comment={more precise
  reasoning}]{Since the larger instances are too large to read the
  complete static ILP-model into our computer}{\added[id=SK]{For
    computational reasons}}, we \replaced[id=JR, comment={used the
  name \STATICCPLEX}]{used \STATICCPLEX{}}{solved the static model}
only for those instances with at most $2000$ applicable lot-types,
leading to $334$ instances with $8381$ to $15{,}344{,}420$ variables
and $3355$ to $3{,}070{,}209$ constraints.  In this set, there were
also $8$ obviously infeasible instances for various reasons that can
be attributed to errors in the data handling or pathologic historic
demand data at the time.  We implemented a straight-forward procedure
to filter them out automatically.  ASG's memory consumption stayed
well below 1GB RAM, thanks to an implicit encoding of lot-types via
their lexicographic index in our implementation.  \added[id=JR,
comment={explained the variation of problem data parameters in the
  production data}]{Note that the wide range of parameters in the
  daily production data stems from the fact that different ware groups
  have different numbers of sizes and different restrictions for the
  numbers of pieces in the lot-types. For example, while winter coats
  usually come in four sizes with only very few pieces allowed in a
  lot-type,\deleted[id=JR, comment={grammar correction}]{ whereas} womens' underwear comes in twelve
  sizes with many pieces possible in a lot-type.  Some of the
  instances are similar in the optimal objective function value
  because the demand data has been estimated aggregated over the whole
  ware group (the sample sizes for estimation are too small
  otherwise).}

In the $5 \times 9$ random instances, the parameters in the $5$ groups are
given in Table~\ref{tab:randompars}.  The parameter sets are similar to those
found in the real data.  For these $5$ sets of parameters, we generated $9$
sets of demand data uniformly at random so that the resulting instances were
consistent.
\begin{table}[htbp]
  \begin{center}
    \footnotesize\sffamily
    \begin{tabular}{*{7}{r}}
      \toprule
      $\abs{\mathcal{B}}$ &  $\abs{\mathcal{S}}$ &  $\abs{\mathcal{M}}$ & $\min_c$ &  $\max_c$ &  $\min_t$ & $\max_t$\\
      \midrule 
      10 & 4 & 5 & 0 & 2 & 4   & 8   \\
      10 & 4 & 5 & 0 & 5 & 3   & 15  \\
      1303 & 4 & 5 & 0 & 5 & 3 & 15  \\      
      1328 & 7 & 5 & 1 & 3 & 7 & 14  \\    
      682 & 12 & 5 & 0 & 5 & 12& 30  \\
      \bottomrule
    \end{tabular}
    
    \bigskip
    \begin{tabular}{*{4}{r}}
      \toprule
      $\abs{\mathcal{L}}$ &  $k$ & \#vars &  \#cons\\
      \midrule
      50 & 3 & 2550 & 513\\
      1211 & 5 & 61{,}761 & 12{,}123\\
      1211 & 5 & 7{,}890{,}876 & 1{,}579{,}239\\
      1290 & 4 & 8{,}566{,}890 & 1{,}714{,}451\\
      1{,}159{,}533{,}584 & 5 & 3{,}955{,}169{,}055{,}024 & 790{,}801{,}904{,}973\\
      \bottomrule
    \end{tabular}
  \end{center}
  \caption{The parameters of the $5 \times 9 = 45$
    \added[id=JR]{random} instances.}
  \label{tab:randompars}
\end{table}

Let us first discuss the results on the $860$ real-world instances.  In those
instances that were solved both statically and dynamically we obtained
identical optimal values up to the expected numerical accuracy.

\begin{table}[H]
  \begin{center}
    \footnotesize\sffamily
    \begin{tabular}{lrrr}
      \toprule
      metric & average & median & maximum \\
      \midrule
      ELDP – LDP & 1638.48\lefteqn{\,} & 1654.25\lefteqn{\,} & 5033.04\lefteqn{\,}\\
      (ELDP – LDP) / ELDP & 66.69\lefteqn{\,\%} & 66.40\lefteqn{\,\%} & 200.59\lefteqn{\,\%}\\
      VSS = ELDP – SLDP & 15.69\lefteqn{\,} & 6.49\lefteqn{\,} & 263.34\lefteqn{\,}\\
      relative VSS = (ELDP – SLDP) / ELDP & 0.36\lefteqn{\,\%} & 0.15\lefteqn{\,\%} & 4.51\lefteqn{\,\%}\\
      \bottomrule
    \end{tabular}
  \end{center}
  \caption[SLDP versus nominal LDP]{Various metrics indicating the
    differences in the model optimal solutions for the SLDP and the
    nominal LDP: LDP denotes the optimal objective of the nominal LDP;
    ELDP denotes the expected cost of the nominal LDP solution over
    all three scenarios; SLDP denotes the optimal objective of the
    SLDP; VSS is the value of the stochastic solution}
  \label{tab:SLDPvsLDP}
\end{table}

\added[id=JR, comment={rewrote the observations}]{We compared ASG and
  \STATICCPLEX{} for the SLDP (via the equivalent LDP) and the nominal
  LDP in Table~\ref{tab:SLDPvsLDP}.  It can be seen that solving the
  nominal LDP instead of the SLDP significantly underestimates the
  expected cost.  However, the quality of its optimal solution
  evaluated in the SLDP is almost optimal: the relative value of the
  stochastic solution (relative VSS) is 0.36\,\% on average; no
  instance showed a relative VSS of more than 4.51\,\%. That means, if
  underestimation of cost and the rare occurrence of a larger relative
  VSS are not an issue, one can safely solve the nominal LDP instead
  of the SLDP.  It is a different question whether this pays off, as
  the next results show.}

\begin{table}[H]
  \begin{center}
    \footnotesize\sffamily
    \begin{tabular}{l@{\quad\quad}rrr@{\quad\quad}rrr}
      \toprule
      metric & \multicolumn{3}{c}{ASG on nominal LDP} & \multicolumn{3}{c}{ASG on SLDP}\\
             & default & fewer col's & more col's & default & fewer col's & more col's \\
      \midrule
      median cpu-time (small) & 4.00\lefteqn{\,s} & 8.08\lefteqn{\,s} & 9.68\lefteqn{\,s} & 10.90\lefteqn{\,s} & 10.79\lefteqn{\,s} & 11.54\lefteqn{\,s}\\
      percentage of~\STATICCPLEX & 0.67\lefteqn{\,\%} & 0.81\lefteqn{\,\%} & 0.81\lefteqn{\,\%} & 1.13\lefteqn{\,\%} & 0.96\lefteqn{\,\%} & 1.02\lefteqn{\,\%}\\
      median cpu-time (all) & 12.21\lefteqn{\,s} & 12.68\lefteqn{\,s} & 14.23\lefteqn{\,s} & 13.39\lefteqn{\,s} & 13.30\lefteqn{\,s} & 15.28\lefteqn{\,s}\\
      median cpu-time (large) & 22.84\lefteqn{\,s} & 25.60\lefteqn{\,s} & 33.13\lefteqn{\,s} & 26.13\lefteqn{\,s} & 32.52\lefteqn{\,s} & 29.83\lefteqn{\,s}\\
      \bottomrule
    \end{tabular}
  \end{center}
  \caption[Cpu-time comparison of ASG parameters]{Cpu-time comparison for
    various parameter settings for ASG solving the SLDP or the nominal
    LDP; here, the ``small'' instances are the ones that were solved
    by both \STATICCPLEX{} and ASG (at most 1820 applicable
    lot-types), the ``large'' instances are the 100 largest ones
    (134,596 through 1,198,774,720 applicable lot-types)}
  \label{tab:ASGtimes}
\end{table}

\added[id=JR, comment={rewrote the observations}]{Table~\ref{tab:ASGtimes} compares cpu-times on three
  parameter settings for ASG, both for solving the nominal LDP and the
  SLDP.  Over all instances in the default setting, solving the
  nominal LDP was around 10\,\% faster on average than solving the
  SLDP.  Still, all cpu-times are usable in practice.  It is a matter
  of the user's preferences whether to pay the 10\,\% extra cpu-time
  to obtain the better cost predictions of the SLDP.  Second, there is
  no unique fastest parameter setting.  Since in no category (LDP or
  SLDP; small, all, or large instances) ``more columns'' is the best,
  it is safe to go with ``default'' or ``fewer columns'', where the
  ``default setting'' has the edge for the large instances.  For us
  the most important result is that for \emph{all three} investigated
  and substantially different parameters settings ASG outperforms
  \STATICCPLEX{} by a median-factor of around 100 and solves even the
  large instances in only mildly longer cpu-times.  In other words,
  our main result does not at all depend on the parameter setting.}

\begin{table}[H]
  \begin{center}
    \footnotesize\sffamily
    \begin{tabular}{l@{\quad\quad}rrr@{\quad\quad}rrr}
      \toprule
      metric & \multicolumn{3}{c}{ASG on nominal LDP} & \multicolumn{3}{c}{ASG on SLDP}\\
             & default & fewer col's & more col's & default & fewer col's & more col's \\
      \midrule
      median no.~of cols (small) & 6461 & 6404 & 28553 & 6846 & 6634 & 28676\\    
      percentage of~\STATICCPLEX &    0.18\lefteqn{\,\%} & 0.18\lefteqn{\,\%} & 0.78\lefteqn{\,\%} & 0.19\lefteqn{\,\%} & 0.18\lefteqn{\,\%} & 0.79\lefteqn{\,\%}\\
      median no.~of cols (all) &   7050 & 6916 & 29089 & 7280 & 7072 & 29338\\    
      median no.~of cols (large) & 7136 & 6871 & 29362 & 7234 & 6981 & 29476\\
      \bottomrule
    \end{tabular}
  \end{center}
  \caption[Columns comparison of ASG parameters]{Comparison of the numbers of
    columns for various parameter settings for ASG solving the SLDP or
    the nominal LDP; here, the ``small'' instances are the ones that
    were solved by both \STATICCPLEX{} and ASG (at most 1820
    applicable lot-types), the ``large'' instances are the 100 largest
    ones (134,596 through 1,198,774,720 applicable lot-types)}
  \label{tab:ASGcols}
\end{table}

\added[id=JR, comment={rewrote the
  observations}]{Table~\ref{tab:ASGcols} compares the number of
  generated columns until optimality is proved by ASG. On average over
  all instances in the ``default'' setting, the nominal LDP needs
  3\,\% fewer columns than the SLDP.  The parameter setting of ``more
  columns'' leads to an unnecessary large number of columns, whereas
  ``default'' and ``few columns'' generate similar amounts of columns,
  making both of them perfectly usable in practice.}

\added[id=JR, comment={rewrote the observations}]{Summary: The
  analysis of the different parameter settings shows that ASG works
  quickly 
  for all three settings, though ``more columns'' wastes some
  memory for unnecessary columns.  Moreover, solving the SLDP has the
  advantage over the nominal LDP of a precise cost prediction at a
  small cpu-time overhead, but the quality of the SLDP solution is
  only insignificantly better than the nominal LDP solution.}

\added[id=JR, comment={rewrote the observations}]{After this summary
  of results aggregated over all 860 real-world instances for both the
  nominal LDP and the SLDP, we have a look at the performances of ASG
  and \STATICCPLEX{} depending on the number of applicable lot-types,
  which is, in our opinion, the single-most important parameter for
  the size of an instance.  We restrict ourselves to the solution of
  the nominal LDP in the following.}

\begin{figure}[H]
  \centering
  \includegraphics[width=\linewidth]{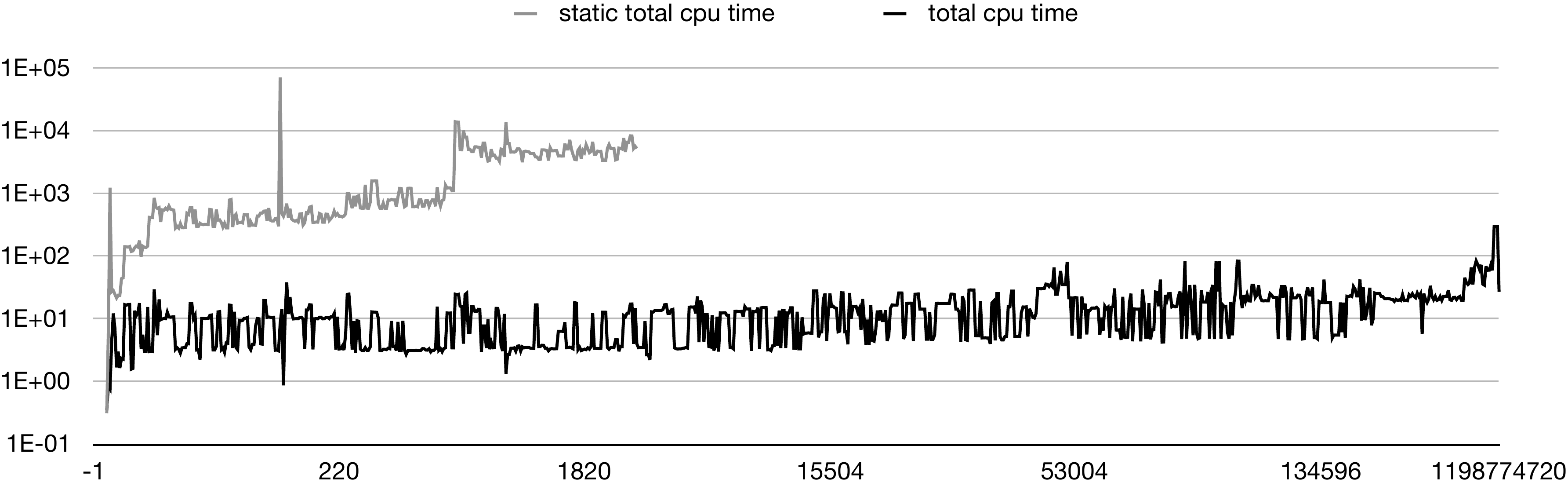}
  \caption{Comparison of cpu times in seconds on 860 real-world
    instances ordered by the numbers of applicable lot-types
    (indicated \replaced[id=JR]{by the grey line}{on the category
      axis})\added[id=JR]{; note that we connected all data points by
      straight lines for better visibility, although we have a
      discrete number of $860$ instances on the category axis}}
  \label{fig:results-real-world-cpu}
\end{figure}
Figure~\ref{fig:results-real-world-cpu} shows the cpu times for
\replaced[id=JR, comment={used \STATICCPLEX}]{\STATICCPLEX}{the static
  solution} and the ASG solution.  The instances have been
\added[id=JR, comment={minor rephrasing}]{weakly} ordered from left to
right by their numbers of applicable lot-types.  \replaced[id=JR,
comment={visualized the no.~of applicable lot-types by its own
  graph}]{The numbers of applicable lot-types is shown by the grey
  graph in the chart.}{Some of these numbers have been indicated on
  the category axis.}  The scale for the time axis in seconds is
logarithmic.  We see that ASG is consistently two to three orders of
magnitude faster than \replaced[id=JR, comment={used
  \STATICCPLEX}]{\STATICCPLEX}{static solving}. It is striking that
the cpu time of ASG is essentially stable around $10$ seconds, usually
never exceeding $100$ seconds over the whole range of instances with
only very few instances over $100$ seconds.  \added[id=JR,
comment={rewrote the observations}]{In contrast to this, the cpu times
  of \STATICCPLEX{} grow more or less proportional to the numbers of
  applicable lot-types.  The visible outlier for the cpu time of
  \STATICCPLEX{} is instance~107 -- we will see below that it exhibits
  an exceptionally large integrality gap of around~10\,\%.  ASG is not
  equally affected by this problem, most probably because the
  optimality gap of SFA is zero for this instance, as we can see below
  as well.}

\begin{figure}[H]
  \centering
  \includegraphics[width=\linewidth]{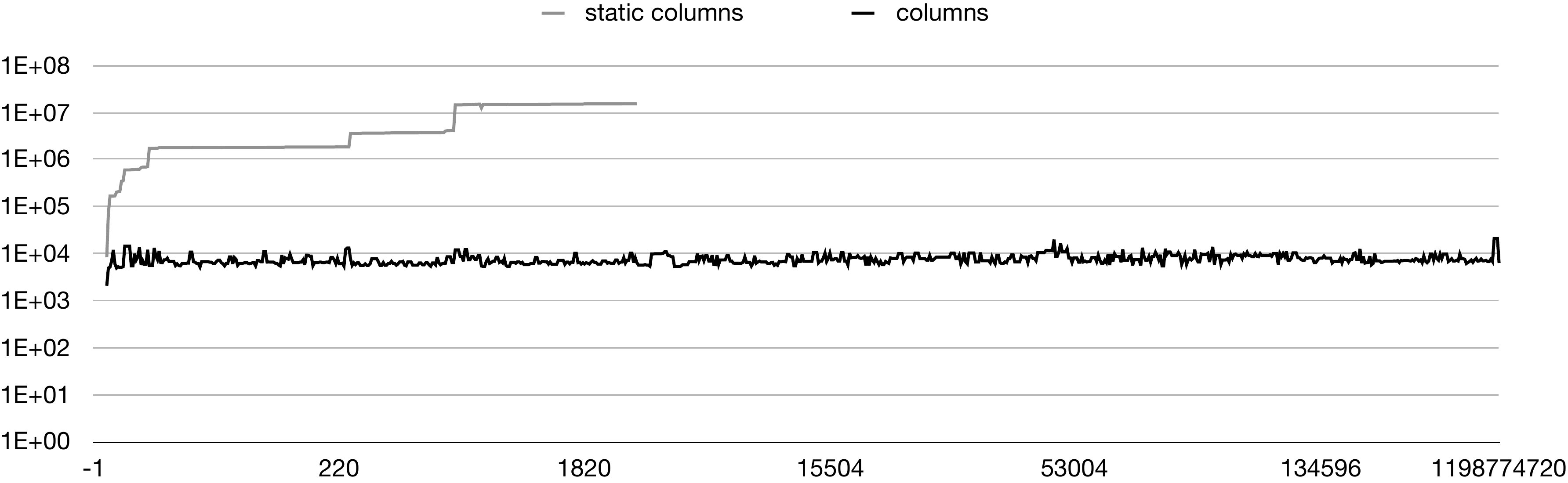}
  \caption{Comparison of columns needed in seconds on 860 real-world
    instances ordered by the numbers of applicable lot-types
    (indicated \replaced[id=JR]{by the grey line}{on the category
      axis})\added[id=JR]{; note that we connected all data points by
      straight lines for better visibility, although we have a
      discrete number of $860$ instances on the category axis}}
  \label{fig:results-real-world-cols}
\end{figure}
The reason for this success can be explained from
Figure~\ref{fig:results-real-world-cols}, where the total numbers of
columns used in the solution procedure is shown on the logarithmic
scale.  It can be seen that ASG needs consistently only around
$10,000$ columns to find and prove the optimum, whereas
\replaced[id=JR, comment={used \STATICCPLEX}]{\STATICCPLEX}{the static
  solution}, of course, uses all columns of the model, which are two
to three orders of magnitude more for the considered instances.
Although we know the numbers of columns that \replaced[id=JR,
comment={used \STATICCPLEX}]{\STATICCPLEX}{the static solution} would
have used if there had been enough memory available, we did not
include these numbers in the chart because the computation did not
take place.

\begin{figure}[H]
  \centering
  \includegraphics[width=\linewidth]{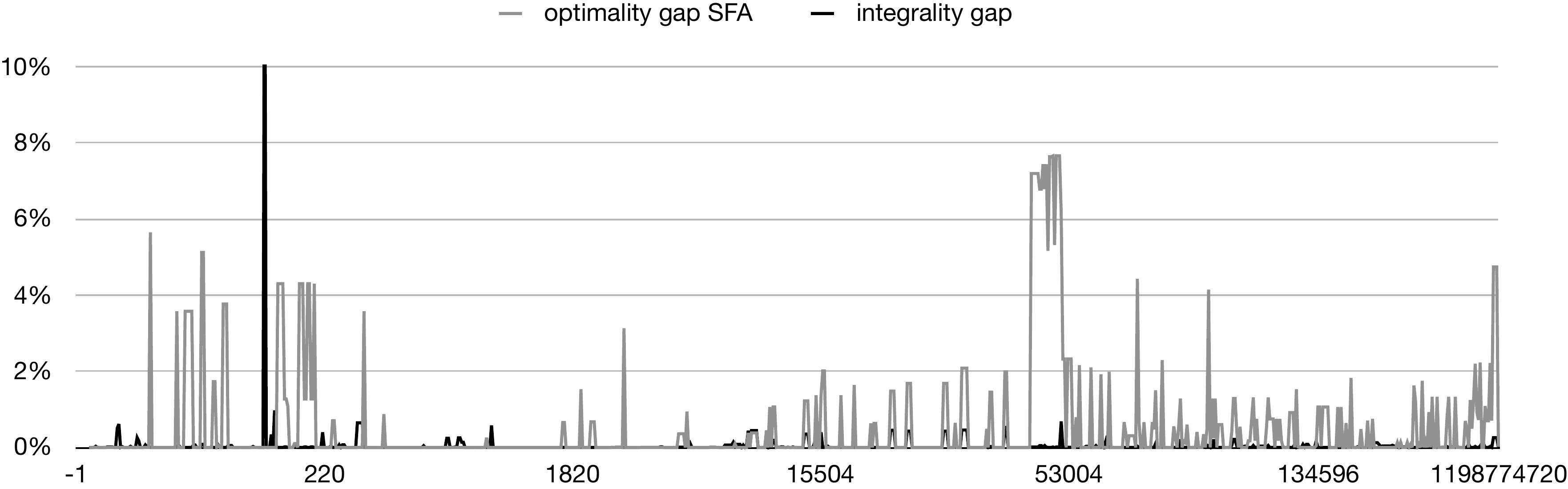}
  \caption{Gaps for the 860 real-world instances ordered by the
    numbers of applicable lot-types\added[id=JR]{; note that we
      connected all data points by straight lines for better
      visibility, although we have a discrete number of $860$
      instances on the category axis}\deleted[id=JR]{(indicated on the
      category axis)}}
  \label{fig:results-real-world-gaps}
\end{figure}
\replaced[id=JR, comment={rewrote the interpretation}]{We wanted to
  find success factors of our model and algorithm.  In our opinion, a
  tight model and a reasonable heuristics are two important success
  factors in a branch-and-price
  method. Figure~\ref{fig:results-real-world-gaps} confirms that the
  integrality gap of our SLDP-model is small throughout with a single
  exception (instance no.~107 with 10\,\%), and also the optimality
  gap of the SFA heuristics is in most cases small (often it is zero,
  e.g., for instance~107 with the large integrality gap).  We have
  also considered the polynomially-sized model in
  Appendix~\ref{sec:comp-model-param}.  Because it exhibits a very
  large integrality gap (see Appendix~\ref{sec:comp-model-param}), we
  dismissed it in favor of the exponential model in this paper.
  Figure~\ref{fig:results-real-world-gaps} provides evidence for the
  fact that our formulation with many variables is quite tight.  The
  computation times show that, in both the smaller and the larger
  instances, our new algorithm ASG can take advantage of this
  increased tightness.}{In order to characterize the difficult
  instances, we evaluated the relative integrality gap and the
  relative gap of the SFA heuristic in
  Figure~\ref{fig:results-real-world-gaps}.  The excellent performance
  really depends on the small integrality gap of the model.  However,
  making a tight model is the whole point of using many variables in
  ILP models.  We see once more that the model tightness is much more
  important than the numbers of columns when it comes to ILP solving
  -- the column generation is very effective.}

\begin{figure}[H]
  \centering
  \includegraphics[width=\linewidth]{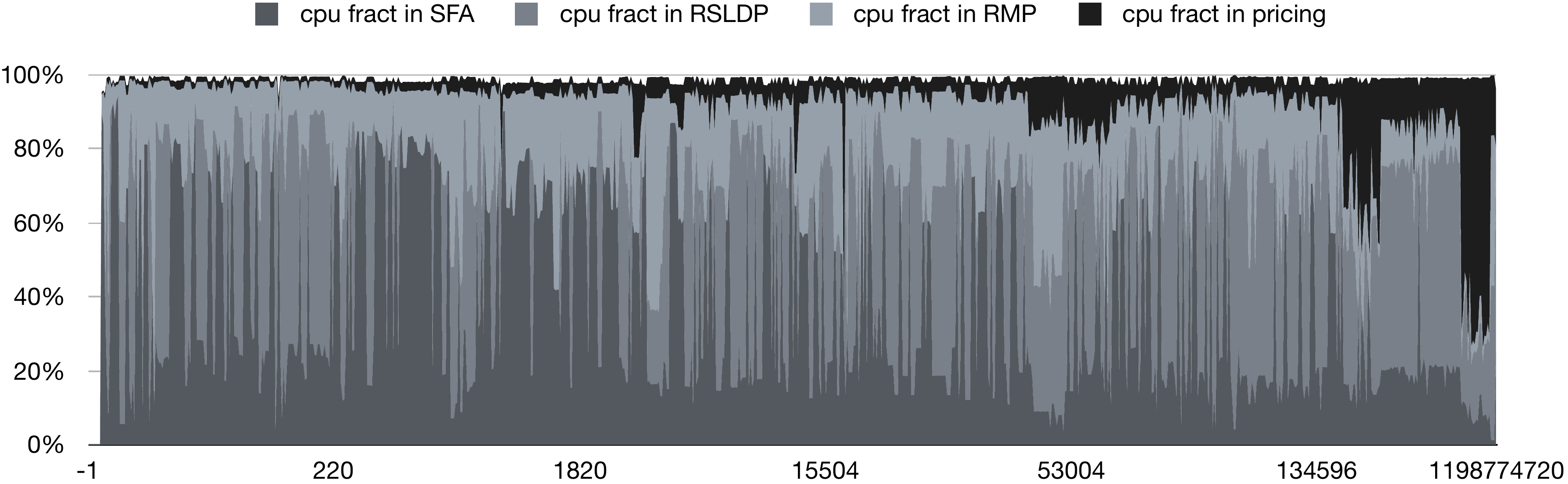}
  \caption{The relative CPU usage for ASG in percent on 860 real-world
    instances ordered by the numbers of applicable
    lot-types\added[id=JR]{; note that we connected all data points by
      straight lines for better visibility, although we have a
      discrete number of $860$ instances on the category
      axis}\deleted[id=JR]{(indicated on the category axis)}}
  \label{fig:results-real-world-cpufract}
\end{figure}
We wanted to know whether ASG has serious bottlenecks.  To this end,
we show in Figure~\ref{fig:results-real-world-cpufract} the fractions
of the CPU time spent in the most important subroutines of ASG.  It
can be seen that while in the fast instances most of the (short) time
is spent in the SFA heuristics, in the slow instances more time is
spent in the RSLDP solving.  Only in the largest instances the
fraction of time spent in the pricing routine becomes dominant.  We
have checked the effectiveness of pricing with respect to the
canonical lifting instead of the characteristic lifting in
Section~\ref{sec:theory}, and substantially more columns have been
generated resulting in larger CPU times.  Thus, the results in
Section~\ref{sec:theory} are beneficial for the success of ASG.

Let us now turn to the results for the $5 \times 9$ random instances.
\added[id=JR, comment={motivated the restriction to the LDP for the
  following}]{Because we already concluded that the difference in
  solving the nominal LDP and the SLDP is minor compared to the
  difference to \STATICCPLEX, we restricted ourselves to the solution
  of randomly generated deterministic LDPs.}

\begin{figure}[H]
  \centering
  \includegraphics[width=\linewidth]{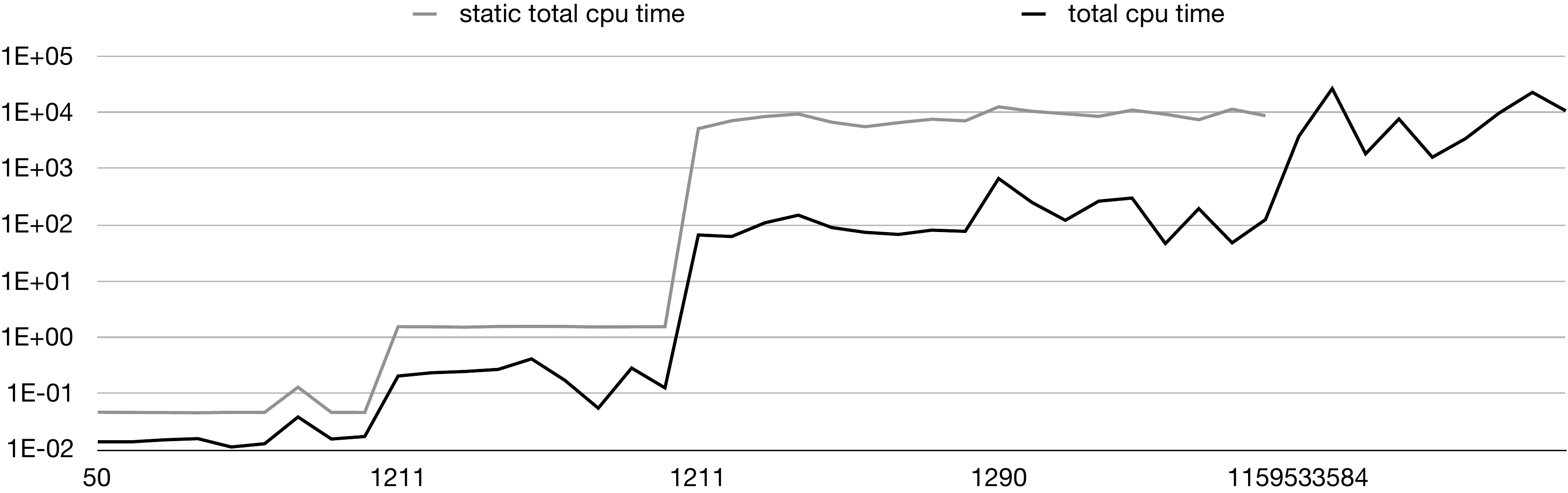}
  \caption{Comparison of CPU times in seconds on $5 \times 9 = 45$
    random instances ordered by the numbers of applicable lot-types
    (indicated \replaced[id=JR]{by the grey line}{on the category
      axis})\added[id=JR]{; note that we connected all data points by
      straight lines for better visibility, although we have a
      discrete number of $5 \times 9$ instances on the category axis}}
  \label{fig:results-random-cpu}
\end{figure}
Figure~\ref{fig:results-random-cpu} shows the CPU times in seconds in
the same way as for the real-world instances.  We see that ASG is one
to two orders of magnitude faster than \replaced[id=JR, comment={used
  \STATICCPLEX}]{\STATICCPLEX}{static solving} and can solve all
\added[id=JR, comment={used the defined notion}]{consistent}
instances.  The instances from the largest group, however, take much
longer this time.  The largest observed CPU time of $26{,}362.7$
seconds was spent in instance~$2$ of the $5$th group.  The performance
difference compared to the real-world instances (some of which have
the about the same size) can be explained.  In the real-world, the
expected demand for sizes is not uniform over all branches.  There is
a hidden imbalance, and $5$ lot-types are enough to cover all the
variations quite well.  In the random instances,
\replaced[id=JR,comment={Is this really correct in your instance
  generator?}]{the expected demands are identical for all sizes in all
  branches.  Thus, any lot-type incurs almost the same cost as all its
  size permutations.  Since for the large random instances with $12$
  sizes the number of permutations leading to a different lot-type is
  much larger than~$k = 5$ (except for lot-types using the same number
  of pieces in every size), there are many distinct lot-type designs
  with almost the same costs.  This makes the long computation times
  plausible.}{all the imbalance is just noise.}  \deleted[id=JR,
comment={removed a remark that noone understood}]{If real-world
  instances could be expected to be like our random instances, then
  probably the lot-type based process would not be used in practice in
  the first place.}  In a sense, the random instances constitute a
stress test for ASG, and we find that the performance compared to
\replaced[id=JR, comment={used \STATICCPLEX}]{\STATICCPLEX}{static
  solving} is still impressive.

\begin{figure}[H]
  \centering
  \includegraphics[width=\linewidth]{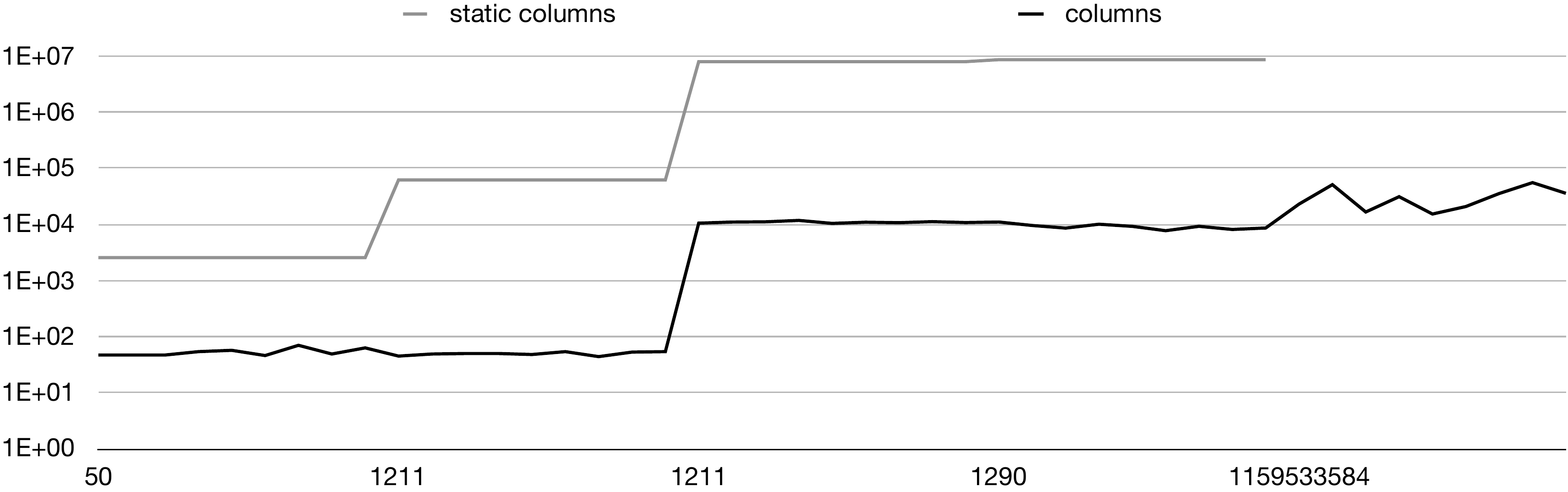}
  \caption{Comparison of columns needed in seconds on
    $5 \times 9 = 45$ random instances ordered by the numbers of
    applicable lot-types (indicated \replaced[id=JR]{by the grey
      line}{on the category axis})\added[id=JR]{; note that we
      connected all data points by straight lines for better
      visibility, although we have a discrete number of $5 \times 9$
      instances on the category axis}}
  \label{fig:results-random-cols}
\end{figure}
The reason for the performance difference between ASG and
\replaced[id=JR, comment={used \STATICCPLEX}]{\STATICCPLEX}{static
  solving} again correlates with the total number of columns used
during the solution process.  Figure~\ref{fig:results-random-cols}
shows that \replaced[id=JR, comment={more standard english}]{the
  number of columns used by ASG is two to three orders of magnitude
  smaller}{ASG gets away (even in the random instances) with two to
  three orders of magnitude fewer columns}.  The numbers of columns
used is stable for the first four groups of instances and starts to
vary in the group of the largest instances.

\begin{figure}[H]
  \centering
  \includegraphics[width=\linewidth]{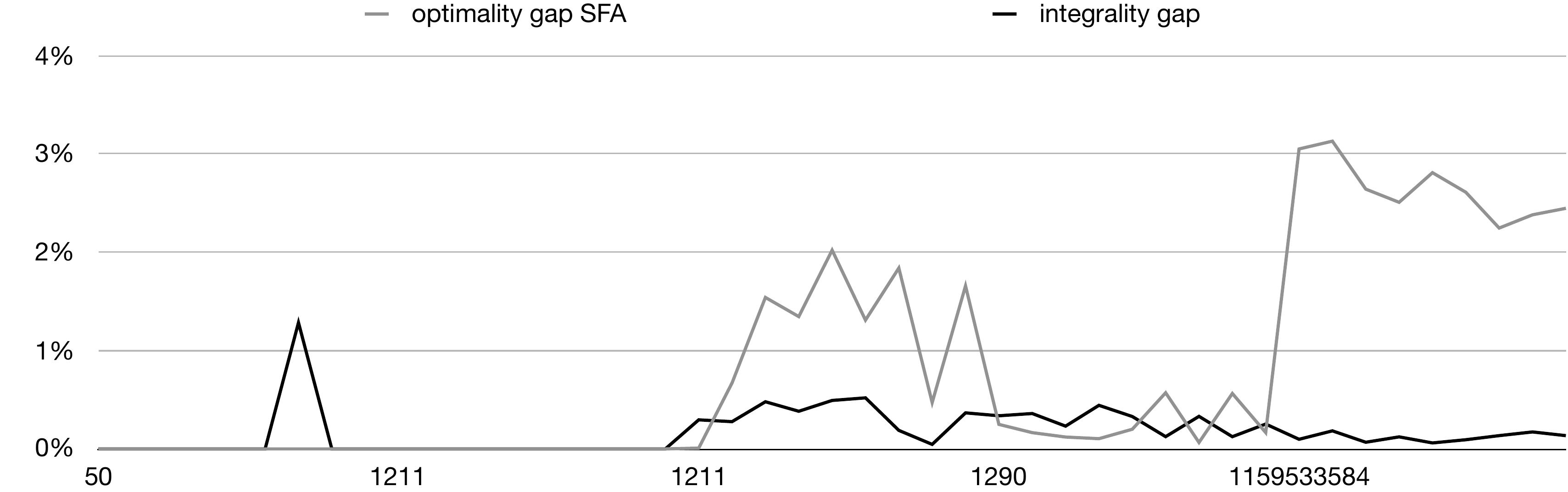}
  \caption{Gaps for the $5 \times 9 = 45$ random instances ordered by
    the numbers of applicable lot-types\added[id=JR]{; note that we
      connected all data points by straight lines for better
      visibility, although we have a discrete number of $5 \times 9$
      instances on the category axis}\deleted[id=JR]{ (indicated on
      the category axis)}}
  \label{fig:results-random-gaps}
\end{figure}
We see in Figure~\ref{fig:results-random-gaps} showing the integrality
gap and the SFA gap that the SFA gap in groups $3$ and $5$ is
substantially larger than in the other groups whereas the integrality
gap is very small \added[id=SK]{throughout} with only very few
exceptions\replaced[id=JR, comment={minor rephrasing}]{. Note,
  however, that the }{, though the} integrality gap is small
everywhere by any practically relevant measure.

\begin{figure}[H]
  \centering
  \includegraphics[width=\linewidth]{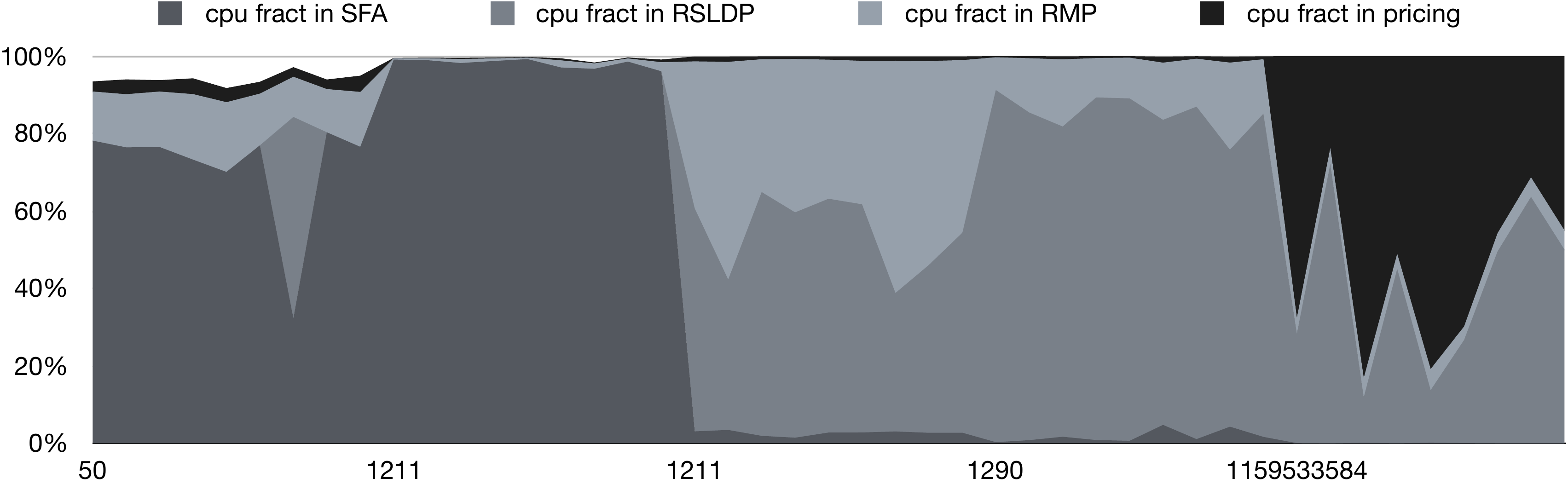}
  \caption{The relative CPU usage for ASG in percent on
    $5 \times 9 = 45$ random instances ordered by the numbers of
    applicable lot-types\added[id=JR]{; note that we connected all
      data points by straight lines for better visibility, although we
      have a discrete number of $5 \times 9$ instances on the category
      axis}\deleted[id=JR]{ (indicated on the category axis)}}
  \label{fig:results-random-cpufract}
\end{figure}
The analysis of possible bottlenecks in
Figure~\ref{fig:results-random-cpufract} shows a similar result as for
the real-world instances.  Only in the very large instances, a
substantial amount of time is spent in pricing.  For the small
instances, the CPU time is dominated by the time limit for SFA.

To sum up, we see that ASG is substantially faster and more
memory-efficient than \replaced[id=JR, comment={used
  \STATICCPLEX}]{\STATICCPLEX}{static solving}, even in smaller
instances.  That is, for this particular problem column generation
should not only be used for the instances that do not fit into the
computer but for \emph{all} instances.

\section{Conclusion and future work}
\label{sec_conclusion}

\noindent
We have considered the stochastic lot-type design problem SLDP, which
is an industrially relevant problem.  We provided a tight ILP model of
it.  This model, however, has in many cases too many variables for
solvers off the shelf.  Thus, we presented a 
branch-and-price algorithm ASG that was able to solve a large number
of real-world and random LDP of various scales exactly to proven
optimality orders of magnitude more quickly 
than static ILP-solving.  The pricing routine in ASG benefits
from 
a characteristic lifting of dual variables that provably avoids the
generation of only seemingly promising but actually ineffective
columns.

It would be valuable to find out whether the principles of ASG can be
generalized to a problem independent ILP setting.  The main reason not to
implement branch-and-price is the
amount of effort to close a small gap  
after an ILP has been solved based on all the columns necessary for an LP
optimum.  ASG might serve as an intermediate technique whose implementation
effort is reasonable.  Another added bonus of ASG is that all progress in commercial
ILP-solver technology is exploited.

\section*{Acknowledgments}

\added[id=JR, comment={added an acknowledgment}]{We thank two
  anonymous referees for their suggestions that greatly improved the readability of the paper.} 


\appendix

\section{A compact model for parameterizable sets of lot-types}
\label{sec:comp-model-param}

\added[id=SK]{For completeness, we want to mention that the large
  number of variables of our ILP formulation from
  Section~\ref{sec:modelling} is far from being inavoidable. In the
  case of a \replaced[id=JR,
  comment={unified}]{parametrized}{parameterizable} set of lot-types,
  as described in Section~\ref{sec_formal_problem_statement}, we can
  model the SLDP with fewer variables. However, the LP relaxation of
  the subsequent compact model yields large integrality gaps.}
\begin{align*}
  \min && \sum_{b\in\mathcal{B}}\sum_{s\in\mathcal{S}}\sum_{a\in\mathcal{A}} \delta_{b,s}^a\\
  s.t.  &&
           \sum_{i\in\mathcal{K}}\sum_{m\in\mathcal{M}} x_{b,i,m} &= 1 && \forall b\in\mathcal{B}\\
       &&
          v_{b,s,i,m} -max_c\cdot x_{b,i,m}& \le 0&& \forall (b,s,m,i)\in\mathcal{U}\\
       &&
          v_{b,s,i,m}-l_{i,s} & \le 0&& \forall (b,s,m,i)\in\mathcal{U}\\
       &&
          v_{b,s,i,m} -l_{i,s}-max_c\cdot x_{b,i,m}& \ge -max_c&& \forall (b,s,m,i)\in\mathcal{U}\\
       &&\!\!\!\!\!\!\!\!\!\!\!\!\!\!\!\!\!\!\!\!\!  \underline{I} \le
          \sum_{b\in\mathcal{B}}\sum_{s\in\mathcal{S}}\sum_{m\in\mathcal{M}}
          \sum_{i\in\mathcal{K}}
          m\cdot v_{b,s,i,m} &\le \overline{I}\\
       &&
          l_{i,s} &\le max_c && \forall i\in\mathcal{K}, s\in\mathcal{S}\\
       &&
          l_{i,s} &\ge min_c && \forall i\in\mathcal{K}, s\in\mathcal{S}\\
       &&
          \sum_{s\in\mathcal{S}} l_{i,s} &\le max_t && i\in\mathcal{K}\\
       &&
          \sum_{s\in\mathcal{S}} l_{i,s} &\ge min_t && \forall i\in\mathcal{K}\\
       &&
          \delta_{b,s}^a+\sum\limits_{i\in\mathcal{K}}\sum\limits_{m\in\mathcal{M}}m\cdot v_{b,s,i,m} &\ge d_{b,s}^a&&
                                                                                                                       \forall b\in\mathcal{B},s\in\mathcal{S},a\in\mathcal{A}\\
       &&
          \delta_{b,s}^a-\sum\limits_{i\in\mathcal{K}}\sum\limits_{m\in\mathcal{M}}m\cdot v_{b,s,i,m} &\ge -d_{b,s}^a&&
                                                                                                                        \forall b\in\mathcal{B},s\in\mathcal{S},a\in\mathcal{A}\\
       &&
          x_{b,i,m} & \in\{0,1\} && \forall b\in\mathcal{B}, i\in\mathcal{K}, m\in\mathcal{M}\\
       &&
          v_{b,s,i,m} & \ge 0 &&\forall (b,s,m,i)\in\mathcal{U}\\
       &&
          l_{i,s} &\in\mathbb{Z}&&\forall i\in\mathcal{K}, s\in\mathcal{S}\\
       && \delta_{b,s}^a &\ge 0&&\forall b\in\mathcal{B},s\in\mathcal{S},
                                  a\in\mathcal{A},
\end{align*}
where we use the abbreviations $\mathcal{K}:=\{1,\dots,k\}$ and
$\mathcal{U}:=\mathcal{B}\times\mathcal{S}\times\mathcal{M}\times\mathcal{K}$.
\added[id=JR, comment={stressed the important difference to the model
  in the main part}]{Obviously, both the number of variables and the
  number of constraints is polynomial in the input parameters, in
  contrast to the model in Section~\ref{sec:modelling}.}
\added[id=JR, comment={introductory sentence}]{The meaning of the
  variables and constraints is as follows.} We utilize the binary
variable $x_{b,i,m}$ to model the assignment of the lot-type and
multiplicity to a certain branch~$b$, i.e., we have $x_{b,i,m}=1$ if
and only if branch~$b$ is supplied with \replaced[id=JR,
comment={minor correction}]{the $i$th selected lot-type}{lot-type~$i$}
in multiplicity~$m$. Of course, for each branch $b$ only one
$x_{b,i,m}$ is one. In order to incorporate the bounds on the total
number of supplied items we utilize the auxiliary variables
$v_{b,s,i,m}$. For a given branch~$b$ and size~$s$ we set
$v_{b,s,i,m}=l_{i,s}\cdot x_{b,i,m}$, i.e.{} $v_{b,s,i,m}=l_{i,s}$ if
branch~$b$ is supplied with lot-type~$i$ in multiplicity~$m$ and
$v_{b,s,i,m}=0$ otherwise. The linearization of this non-linear
equation is quite standard using suitable big-M constants.  The
deviations $\delta_{b,s}^a$ then are given by
$$\delta_{b,s}^a=\left|d_{b,s}^a-\sum\limits_{i=1}^k\sum\limits_{m\in\mathcal{M}}m\cdot v_{b,s,i,m}\right|.$$
Again the linearization of this non-linear equation is
standard.\footnote{We remark that one can express the deviations
  $\delta_{b,s}^a$ also using the binary assignment variables
  $x_{b,i,m}$ instead of the item counts $v_{b,s,i,m}$. In this case
  we have to replace the two inequalities containing $\delta_{b,s}^a$
  by $\delta_{b,s}^a+m\cdot l_{i,s}-d_{b,s}^a\cdot x_{b,i,m} \ge 0$
  and
  $\delta_{b,s}^a-m\cdot l_{i,s}+m\cdot max_c\cdot\left(1-
    x_{b,i,m}\right) \ge -d_{b,s}^a$ for all
  $(b,s,m,i)\in\mathcal{U}, a\in\mathcal{A}$.  Or we may use both sets
  of inequalities. \deleted[id=JR]{It turns out that this alternative
    formulations is solved slightly faster on some inspected
    instances.}}

Since the symmetric group on $k$ elements acts on the $k$ different
lot-types one should additionally destroy the symmetry in the stated
ILP formulation. This can be achieved by assuming that the lot-types
$\left(l_{i,s}\right)_{s\in\mathcal{S}}$ are lexicographically
ordered. For the ease of notation we assume that the set of
sizes~$\mathcal{S}$ is given by $\{1,\dots,s\}$. As an abbreviation we
set $t:=\max_c-\min_c+1$. With this the additional inequalities
\begin{align*}
  && \sum_{j=1}^s \left(l_{i,j}-\min_c\right)\cdot t^{s-j} &\ge
                                                             1+\sum_{j=1}^s \left(l_{i+1,j}-\min_c\right)\cdot t^{s-j} &&\forall
                                                                                                                          1\le i\le k-1,
\end{align*}
which are equivalent to
$\sum_{j=1}^s \left(l_{i,j}-l_{i+1,j}\right)\cdot t^{s-j}\ge 1$
for all $1\le i\le k-1$, will do the job. We remark that using some
additional auxiliary variables a numerical stable variant can be
stated easily. (This becomes indispensable if $t^s$ gets too large.)

\medskip

We remark that the LP relaxation of the compact model yields large
integrality gaps~\footnote{Assuming some technical conditions on the
  demands, the four parameters for the lot-types, and the applicable
  multiplicities, which are not too restricting, one can construct a
  solution of the LP relaxation having an objective value of zero.
  E.g.\ we assume $\min_c\le d_{b,s}^a\le \max_c\cdot\max\{m\in\mathcal{M}\}$}.
The typical approach would now be to solve a
Dantzig-Wolfe decomposition of the compact model by dynamic column
generation, leading to a master problem very similar to the SLDP model
we presented in the first place.  And column generation on that SLDP
model is what we proposed in the paper.

%
%


\end{document}